\DeclareMathAlphabet{\mathcal}{OMS}{cmsy}{m}{n} 
  \begingroup\color{red}\begin{center}\begin{minipage}{12cm} $\blacktriangleright\ $}{%
\newcommand{\sk}{\smallskip}
\newcommand{\mk}{\medskip}
\newcommand{\bk}{\bigskip}
\renewcommand{\emptyset}{\ensuremath{\varnothing}}     
\newcommand{\G}{\mathbf{G}}
\newcommand{\B}{\mathbf{B}}
\newcommand{\T}{\mathbf{T}}
\renewcommand{\P}{\mathbf{P}}
\renewcommand{\L}{\mathbf{L}}
\newlength{\leftlength}
\newlength{\rightlength}
\newlength{\calculskip}
\newcommand{\calculvskip}[1]{%
  \ifthenelse{#1 = 0}{\setlength{\calculskip}{0pt}}{}%
  \ifthenelse{#1 = 1}{\setlength{\calculskip}{\smallskipamount}}{}%
  \ifthenelse{#1 = 2}{\setlength{\calculskip}{\medskipamount}}{}%
  \ifthenelse{#1 = 3}{\setlength{\calculskip}{\bigskipamount}}{}%
  \ifthenelse{#1 = 4}{\setlength{\calculskip}{1cm}}{}%
  \vskip\calculskip
}
\newcommand{\leftcentersright}[4][2]{%
        \settowidth{\leftlength}{#2}%
        \settowidth{\rightlength}{#4}%
        \calculvskip{#1}
        \noindent#2\hskip-\leftlength%
        \hfill#3\hfill
        \mbox{}\hskip-\rightlength#4%
        \vskip\calculskip%
        }
\newcommand{\centers}[2][2]{\leftcentersright[#1]{}{#2}{}}
\newcommand{\leftcenters}[3][2]{\leftcentersright[#1]{#2}{#3}{}}
\newcommand{\centersright}[3][2]{\leftcentersright[#1]{}{#2}{#3}}
\def\svhline{%
  \noalign{\ifnum0=`}\fi\hrule \@height2\arrayrulewidth \futurelet
   \reserved@a\@xhline}
\def\hlinewd#1{%
\noalign{\ifnum0=`}\fi\hrule \@height #1 %
\futurelet\reserved@a\@xhline}
\numberwithin{equation}{section}
\newtheorem{thm}[equation]{Theorem}
\newtheorem{lem}[equation]{Lemma}  
\newtheorem{cor}[equation]{Corollary}
\newtheorem{conj}[equation]{Conjecture}
\newtheorem*{thm2}{Theorem}
\newtheorem*{conj2}{Conjecture 1}
\theoremstyle{definition}
\newtheorem{rmk}[equation]{Remark}
\newcommand{\X}{\mathrm{X}}
\newcommand{\Hc}{\mathrm{H}_c}
\newcommand{\Rgc}{\mathrm{R}\Gamma_c}
\newcommand{\F}{\mathbb{F}}
\newcommand{\qlb}{\overline{\mathbb{Q}}_\ell}
\begin{document}

\title{Cohomology of Deligne-Lusztig varieties for groups of type $A$}
\author{Olivier Dudas\footnote{Oxford Mathematical Institute.}
\footnote{The author is supported by the EPSRC, Project No EP/H026568/1 and by Magdalen College, Oxford.}}

\maketitle

\begin{abstract} We study the cohomology of parabolic Deligne-Lusztig varieties associated to unipotent blocks of $\mathrm{GL}_n(q)$. We show that the geometric version of Brou\'e's conjecture over $\qlb$, together with Craven's formula, holds for any unipotent block whenever it holds for the principal $\Phi_1$-block.
\end{abstract}

\section*{Introduction}

Let $\G$ be a connected reductive algebraic group over  $\F = \overline{\F}_p$ with an  $\F_q$-structure associated to a Frobenius endomorphism $F$. Let $\ell$ be a  prime number different from $p$ and $b$ be a unipotent $\ell$-block of $\G^F = \G(\F_q)$. When $\ell$ is large, the defect group of $b$ is abelian, and the geometric version of Brou\'e's conjecture predicts that the cohomology of some Deligne-Lusztig variety should induce a derived equivalence between $b$ and its Brauer correspondent \cite{BMa2}. 

\sk

When the centraliser of the defect group of $b$ is a torus, then in \cite{BMi2}  Brou\'e and Michel identified which specific class of Deligne-Lusztig varieties should be considered. They correspond to good $d$-regular elements or equivalently to $d$-roots of $\boldsymbol{\pi} = \mathbf{w}_0^2$ in the Braid monoid. In a recent work \cite{DM3}, Digne and Michel introduced the notion of $d$-\emph{periodic element}  to generalise this to the parabolic setting. If $b$ is a unipotent $\Phi_d$-block, then it is to be expected that there exists a $d$-periodic element $(\mathbf{I},\mathbf{w})$ such that the corresponding parabolic Deligne-Lusztig variety $\widetilde \X(\mathbf{I},\mathbf{w}F)$ is a good candidate for inducing the derived equivalence predicted by Brou\'e's conjecture. Furthermore, Chuang and Rouquier conjectured in \cite{CR} that this equivalence is perverse, with a perversity function that has recently been conjectured by Craven in \cite{Cra}. Surprisingly, it can be expressed by a function $C_d$ depending only on the generic degrees of the corresponding characters.

\sk

If we restrict our attention to the characteristic zero  then we obtain a conjectural explicit description of the unipotent part of the cohomology of $\widetilde \X(\mathbf{I},\mathbf{w}F)$. The fundamental property that we derive from Brou\'e's conjecture is that the cohomology groups of $\X(\mathbf{I},\mathbf{w}F)$ are mutually disjoint. More precisely, it can be formulated as follows:

\begin{conj2}\label{broue}Any $d$-cuspidal pair is conjugate to a pair $(\L_I^{\mathbf{w} F},\chi)$ where $(\mathbf{I} , \mathbf{w})$ is a $d$-periodic element. Moreover, if $\mathcal{F}_\chi$ is the corresponding $\overline{\mathbb{Q}}_\ell$-local system on $\X(\mathbf{I},\mathbf{w}F)$, then $(\mathbf{I},\mathbf{w})$ can be chosen such that

\begin{itemize}

\item[$\mathrm{(i)}$] The $\overline{\mathbb{Q}}_\ell \G^F$-modules $\mathrm{H}^i\big(\X(\mathbf{I},\mathbf{w}F),\mathcal{F}_\chi\big)$ are mutually disjoint. 

\item[$\mathrm{(ii)}$] If $\rho$ is a irreducible unipotent constituent of $\mathrm{H}^i\big(\X(\mathbf{I},\mathbf{w}F),\mathcal{F}_\chi\big)$ then $\rho$ lies in the $\Phi_d$-block associated to $\chi$ and $i = C_d(\deg \rho / \deg \chi)$.

\end{itemize}

\end{conj2}

\noindent In addition, the endomorphism algebra $\mathrm{End}_{\G^F}\big(\mathrm{H}^\bullet(\X(\mathbf{I},\mathbf{w}F),\mathcal{F}_\chi)\big)$ should be endowed with a natural structure of  $d$-cyclotomic Hecke algebra. Let us note the following important consequence of this property: the eigenvalue of any sufficiently divisible power $F^m$ of $F$ on $\rho$ should be $q^{ m \, (a_\rho + A_\rho - a_\chi-A_\chi)/d}$.

\sk

The choice of a specific $d$-periodic element in this conjecture is not very relevant: it is conjectured that any other $d$-periodic element $(\mathbf{I},\mathbf{w}')$ can be obtained from $(\mathbf{I},\mathbf{w}')$ by cyclic shifts, so that the cohomology of the corresponding varieties are isomorphic. This has already been proven when $\mathbf{I} = \emptyset$ and $F$ acts trivially on $W$ (see \cite[Remark 7.4]{DM3}).

\sk

When $d = 1$, the unipotent blocks correspond to the usual Harish-Chandra series. In particular, when $(\G,F)$ has type $A$, there is a unique unipotent block and it contains all the unipotent characters. The  purpose of this paper is to show that from the cohomology of $\X(\boldsymbol \pi)$ one can actually deduce all the other interesting cases (see Corollary \ref{corA}):

\begin{thm2}\label{mainthm}For groups of type $A$, Conjecture \hyperref[broue]{1} holds whenever it holds for $d=1$, that is for $\X(\boldsymbol \pi)$.
\end{thm2}

Let us emphasize that Conjecture \hyperref[broue]{1} is known to be true only in a very small number of cases, namely when $d=h$ is the Coxeter number by Lusztig \cite{Lu}, for groups of rank $2$ by Digne, Michel and Rouquier \cite{DMR} and when $d=n$ for  $A_n$ and $d=4$ for $D_4$ by Digne and Michel \cite{DM2}. Therefore this theorem represents a very important step towards a
proof of the geometric version of Brou\'e's conjecture.

\sk

Even though this result depends on the conjectural description of $\X(\boldsymbol \pi)$, one can give  an effective proof of Conjecture  \hyperref[broue]{1} for principal $\Phi_d$-blocks when $d > (n+1)/2$. In that case the defect group is cyclic, and the modular representation theory of the block is fully understood. We will address this problem in a subsequent paper, where we will  compute the cohomology of $\overline{\mathbb{Z}}_\ell$ of the corresponding Deligne-Lusztig variety.

\sk

To give a flavour of the proof of the main theorem, recall that for groups of type $A$, we have by \cite{BMM} a combinatorial description of the Deligne-Lusztig induction associated to $\widetilde \X(\mathbf{I},\mathbf{w}F)$. In terms of partitions, it corresponds to adding a certain number of $d$-hooks. By transitivity, one can decompose $\widetilde \X(\mathbf{I},\mathbf{w}F)$ by means of simpler varieties $\widetilde \X_{n,d}$, each of  which corresponds to adding a single $d$-hook to a partition. Now, using  the methods developed in \cite{Du5} one can compute the cohomology of (some quotient of) $\widetilde \X_{n,d}$ in terms of  $\widetilde \X_{n-1,d}$ and $\widetilde \X_{n-1,d-1}$ (see Theorem \ref{restrA}), providing an inductive argument to tackle Conjecture \hyperref[broue]{1}.  

\sk

Note finally that Theorem \ref{restrA} can be generalised to many other situations in type $B$, $C$ and $D$. However, two main problems arise: firstly, the limit case is either $\X(w_0)$ or $\X(\boldsymbol \pi)$ and does not contain all the unipotent characters. Secondly, the methods in \cite{Du5} work obviously for non-cuspidal unipotent characters only. We can obtain partial results on the principal series in that situation, which we believe are too coarse to be mentioned in this paper.

\section{Parabolic varieties in type \texorpdfstring{$A$}{A}}

Throughout this paper, $\G$ will denote any connected reductive algebraic group of type $A_n$ over 
$\F = \overline{\F}_p$. We will consider a Frobenius endomorphism $F : \G \longrightarrow \G$ defining a standard $\F_q$-structure on $\G$. Since we will be interested in unipotent characters only, we will not make any specific choice for $(\G,F)$ in its isogeny class. If $\mathbf{H}$ is any $F$-stable subgroup of $\G$ we will denote by $H = \mathbf{H}^F$ the associated finite group. 

\sk

The Weyl group $W$ of $\G$ is the symmetric group $\mathfrak{S}_n$ and its Braid monoid $B^+$ is the usual Artin monoid. It is generated by a set $\mathbf{S} = \{\mathbf{s}_1, \ldots, \mathbf{s}_n\} $ corresponding to simple reflections $s_1, \ldots, s_n$ of $W$. Following \cite{DM3}, we define for $1 \leq d \leq n+1$ 

\centers{$\mathbf{v}_d = \mathbf{s}_{1} \mathbf{s}_2 \cdots \mathbf{s}_{n-\lfloor \frac{d}{2}\rfloor} \mathbf{s}_n \mathbf{s}_{n-1} \cdots \mathbf{s}_{\lfloor \frac{d+1}{2}\rfloor} $}

\leftcenters{and}{$\mathbf{J}_d =  \big\{\mathbf{s}_i \ | \ \lfloor\frac{d+1}{2}\rfloor + 1 \leq i \leq n-\lfloor\frac{d}{2}\rfloor \big\} \, \subset \, \mathbf{S}.$}

\noindent We are interested in computing the cohomology of the variety

\centers{$\mathrm{X}_{n,d} \, = \, \mathrm{X}( \mathbf{J}_d, \mathbf{v}_d F)$}

\noindent with coefficients in any unipotent local system. Note that for $d>1$, the element $\mathbf{v}_d$ is reduced so that we can work with the variety $\mathrm{X}(J_d,v_dF)$. By \cite[Lemma 11.7 and 11.8]{DM3}, the pair $(\mathbf{J}_d,\mathbf{v}_d)$ is \emph{$d$-periodic} so that it makes sense to study the cohomology of $\X_{n,d}$. Recall from \cite{DM3} that a $d$-periodic element is any pair $(\mathbf{I}, \mathbf{b})$ with $\mathbf{I} \subset \mathbf{S}$ and $\mathbf{b} \in B^+$ such that 
 $\mathbf{b}F(\mathbf{b}) \cdots F^{d-1} (\mathbf{b}) = \boldsymbol \pi / \boldsymbol \pi_I$ where $\boldsymbol \pi = \mathbf{w}_0^2$ is the generator of the pure Braid group. 
It has been shown in \cite{DM3} that this forces $\mathbf{b}F$ to normalise $\mathbf{I}$.
Note that  when $d \leq (n+1)/2$, $\mathbf{v}_d$ is not \emph{maximal} in the sense that it is not extendable to a  $d$th root of $\boldsymbol \pi / \boldsymbol \pi_I$ for a  proper subset $\mathbf{I}$ of $\mathbf{J}_d$. However, it can still be used to associate to any unipotent block a "good" parabolic Deligne-Lusztig variety. Before making any precise statement, we shall briefly recall the combinatorial objects that we will use.

\mk 

\noindent \thesection.1. \textbf{$\Phi_d$-blocks of $G$.} The unipotent characters of $G$ are labeled by the partitions of $n+1$. If $\lambda$ is such a partition, we will denote by $\chi_\lambda$ the corresponding character, with the convention that $\chi_{(1,1,\ldots,1)} = \mathrm{St}_G$ is the Steinberg character of $G$. We shall also fix a representation $V_\lambda$ over $\qlb$ of character $\chi_\lambda$. For $1 \leq d \leq n+1$, the pair $(\L_{J_d},v_d F)$ represents a $d$-Levi subgroup of $\G$. From \cite{BMM}, we know how to express the $d$-Harish-Chandra induction in terms of  combinatorics of partitions. To fix the notation,  let $\mu$ be a partition of $n+1-d$ and $X = \{x_1 < x_2 <  \cdots <  x_s\}$ be a $\beta$-set associated to $\mu$. We may and we will assume that $X$ is big enough, so that it contains $\{0,1,\cdots, d-1\}$.  Let $X'$ be the subset of $X$ defined by $X' = \{x \in X \, | \, x+d \notin X\}$. It represents the possible $d$-hooks that can be added to $\mu$. For $x \in X'$ we will denote by  $ \mu * x$ the partition of $n+1$ which has $(X \smallsetminus \{x\}) \cup \{x + d\}$ as a $\beta$-set.

\sk

 We fix an $F$-stable Tits homomorphism $t : B^+ \longrightarrow N_\G(\T)$. By \cite{DM3} the variety $\X_{n,d}$ has an étale covering $\widetilde \X_{n,d} = \widetilde \X(\mathbf{J}_d, \mathbf{v_d} F)$ with Galois group  $\L_{J_d}^{t(\mathbf{v_d})F}$. Since $(\L_{J_d},t(\mathbf{v}_d)F)$ is a split group of type $A_{n-d}$,  the partition $\mu$ defines a unipotent local system $\mathcal{F}_\mu$ on $\X_{n,d}$ such that $\Hc^\bullet(\X_{n,d},\mathcal{F}_\mu)$ and $\Hc^\bullet(\widetilde \X_{n,d},\qlb)_{\chi_\mu}$ are isomorphic. Then we deduce from \cite[Section 3.4]{BMM} that there exist signs $\varepsilon_x=\pm1$ such that the $d$-Harish-Chandra induction of $\chi_\mu$ is given by

\centers{$  \displaystyle \mathrm{R}_{\L_{J_d}}^\G (\chi_\mu) \, = \, \sum (-1)^i \Hc^i(\X_{n,d},\mathcal{F}_\mu) \, = \, \sum_{x \in X'} \varepsilon_x \chi_{\mu * x}.$}

\noindent In particular, the $d$-Harish-Chandra restriction of $\chi_\lambda \in \mathrm{Irr}\, G$ is non-zero until we reach the $d$-core $\nu$ of $\lambda$, which corresponds to a $d$-cuspidal character $\chi_\nu$. The unipotent characters in the $\Phi_d$-block of $G$ containing $\chi_\lambda$ are all the characters that can be obtained by successive $d$-inductions from $\chi_\nu$. They correspond to partitions of $n+1$ that have $\nu$ as a $d$-core. 

\mk

\noindent \thesection.2. \textbf{A parabolic variety associated to a $\Phi_d$-block.} The cohomology of the variety $\X_{n,d}$ induces to a minimal $d$-induction since there is no  $d$-split Levi between $(\L_{{J}_d}, t(\mathbf{v_d})F)$ and $(\G,F)$. By transitivity, one can form a Deligne-Lusztig variety $\X(\mathbf{I},\mathbf{w})$ associated to the $d$-cuspidal character $\chi_\nu$. 
Let $n+1-ad$ be the size of $\nu$ and consider for $i = 1, \ldots ,a$ the pairs $(\mathbf{J}_d^{(i)}, \mathbf{v}_d^{(i)})$ where $(\mathbf{J}_d^{(0)} , \mathbf{v}_d^{(0)}) = (\mathbf{J}_d,\mathbf{v}_d)$ and  
$(\mathbf{J}_d^{(i+1)}, \mathbf{v}_d^{(i+1)})$ is the analogue of the pair $(\mathbf{J}_d,\mathbf{v}_d)$ for the split group $(\L_{J_d^{(i)}}, t(\mathbf{v}_d^{(i)} \cdots \mathbf{v}_d^{(1)} ) F)$ of type $A_{n-id}$. Then one can readily check that the pair $(\mathbf{I},\mathbf{w}) = (\mathbf{J}_d^{(a)},  \mathbf{v}_d^{(a)} \cdots \mathbf{v}_d^{(1)})$ is $d$-periodic. 

\sk

By \cite[Proposition 8.26]{DM3} the associated Deligne-Lusztig variety $\widetilde \X(\mathbf{I},\mathbf{w}F)$ is isomorphic to the following almalgamated product

\centers{$ \widetilde \X(\mathbf{J}_d^{(1)}, \mathbf{v}_d^{(1)} F) \times_{\L_{J_d^{(1)}}^{t(\mathbf{v}_d^{(1)}) F}} \cdots  \times_{\L_{J_d^{(a-1)}}^{t(\mathbf{v}_d^{(a-1)} \cdots \mathbf{v}_d^{(1)}) F}} \widetilde \X_{\L_{\mathbf{J}_d^{(a-1)}}}(\mathbf{J}_d^{(a)}, \mathbf{v}_d^{(a)} t(\mathbf{v}_d^{(a-1)} \cdots \mathbf{v}_d^{(1)}) F).$}

\noindent Now each variety in this decomposition corresponds to a variety $\widetilde \X_{n-id,d}$ for some $i = 0,\ldots, a-1$. Since the cohomology of the latter with coefficients in a unipotent local system depends only on the isogeny class of the group (here, the split type $A_{n-id}$) we obtain
\begin{equation} \label{eqminmax}\Rgc\big(\X(\mathbf{I},  \mathbf{w} F\big), \mathcal{F}_\nu)\, \simeq \, \Rgc(\widetilde \X_{n,d},\qlb) \otimes_{A_{n-d}}  \cdots \otimes_{A_{n-(a-1)d}} \Rgc(\X_{n-(a-1)d,d},\mathcal{F}_\nu).
\end{equation}
\noindent Consequently, if we believe that the vanishing property in Conjecture \hyperref[broue]{1} holds for the cohomology of $\X(\mathbf{I},\mathbf{w})$, then  for any partition $\mu$ of $n+1-d$, the graded $G$-module $\Hc^\bullet(\X_{n,d}, \mathcal{F}_\mu)$ should be multiplicity-free.

\mk

\noindent \thesection.3. \textbf{Craven's formula in type $A$.} Conjecturally, the unipotent character $\chi_\lambda$ is a constituent of only one cohomology group of $\mathrm{R}\Gamma\big(\X(\mathbf{I},  \mathbf{w} F\big), \mathcal{F}_\nu)$. Craven  proposed in \cite{ Cra} a formula which gives the  degree of this cohomology group in terms of $d$ and the generic degree of $\chi_\lambda$ and $\chi_\mu$. 
More precisely, he considers a function $C_d$ on some set of enhanced cyclotomic polynomials and conjectured that
\begin{equation}\label{cravenform}\big\langle \chi_\lambda\, ; \, \mathrm{H}^i(\X(\mathbf{I},\mathbf{w}),\mathcal{F}_\nu) \big\rangle_G \neq 0 \iff i= C_d(\deg\, \chi_\lambda) - C_d(\deg \chi_\nu).\end{equation}
Let us recall the definition of $C_d$: assume that $P \subset \mathbb{Q}[x]$ is a polynomial such that the non-zero roots $z_1, \ldots, z_m$ (written with multiplicity) of $P$ are all roots of unity. Let us denote by $d^\circ(P)$ the degree of $P$ and by $v(P)$ its  valuation, that is the degree of $x^{d^\circ(P)} P(x^{-1})$. Then Craven's function $C_d$ is defined by

\centers{$ C_d(P) = \displaystyle \frac{1}{d} \, \big(d^\circ(P) + v(P)\big)   + \#\big\{ i = 1,\ldots,m \, | \, \mathrm{Arg} \, z_i  < 2\pi/d\big\}  - \frac{1}{2} \#\big\{i =1,\ldots,m \, | \, z_i = 1\big\}. $}

\noindent Here, the argument $\mathrm{Arg}\, z$ of a non-zero complex number $z$ is taken in $ [0;2\pi)$. More  generally, if $\zeta = \exp(2\mathrm{i}\pi k/d) $ is a primitive $d$-root of unity, one can define a function $C_\zeta$ by replacing $d$ by $d/k$ and \ref{cravenform} should hold for $d$th roots of $(\boldsymbol \pi / \boldsymbol \pi_I)^k$. Note also that Craven's function is additive: it satisfies $C_\zeta(PQ) = C_\zeta(P) + C_\zeta(Q)$.

\sk

For groups of type $A$, the degree $\deg\, \chi_\lambda$ of the unipotent character $\chi_\lambda$ is explicitely known (see for example \cite[Section 13]{Car}). It is a polynomial in $q$ of degree $A_\lambda$ and valuation $a_\lambda$ and no factors of the form $(q-1)$ can appear. In particular, Craven's function can be written

\centers{$ C_\zeta(\deg \, \chi_\lambda) = \displaystyle \frac{2\pi}{\mathrm{Arg}\, \zeta} \, \big(a_\lambda + A_\lambda \big)   + \#\big\{ i = 1,\ldots,m \, | \, \mathrm{Arg} \, z_i  < \mathrm{Arg} \, \zeta \big\} $}

\noindent where $z_1,\ldots,z_m$ are the roots with multiplicity of the polynomial $\deg\, \chi_\lambda$.
Note that with this description it is already not obvious that the rational number on the right-hand side of \ref{cravenform} is actually an integer. 

\sk

Since $C_d$ is additive, formula \ref{cravenform}  together with the quasi-isomorphism \ref{eqminmax} suggests that the partition $\nu$ should not be necessarily a $d$-core. In the case of an elementary $d$-induction (when $a=1$) we can write everything  explicitely using \cite[Proposition 9.1]{Cra}; the second equality follows from an easy calculation:

\begin{lem}\label{pervA}Let $\mu$ be a partition with corresponding $\beta$-set $X$ that we assume to be large enough. For $x \in X'$, we have

\centers{$ C_d(\deg\, \chi_{\mu * x}) - C_d(\deg\, \chi_{\mu})\, = \, 2\big(n+1-d-x+\#\{y \in X \, | \, y < x\}\big)+ \#\{y \in X \, | \, x < y < x+d\} $}

\leftcenters{and}{$a_{\mu * x} + A_{\mu * x} - a_\mu - A_\mu \, = \, d(n-d+s-x).$}
\end{lem}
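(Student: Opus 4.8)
The plan is to prove both identities by direct computation from the known formula for generic degrees in type $A$, using the $\beta$-set combinatorics. First I would recall the explicit product formula for $\deg\chi_\mu$ in terms of a $\beta$-set $X = \{x_1 < \cdots < x_s\}$: up to a power of $q$ and a power of $(q-1)$ (which vanishes here), it is a ratio of terms $\prod_i \prod_{j<i}(q^{x_i} - q^{x_j})$ over $\prod_{i=1}^{s}\prod_{k=1}^{x_i}(q^k - 1)$, suitably normalised. Passing from $\mu$ to $\mu * x$ replaces $x$ by $x + d$ in the $\beta$-set, so the ratio $\deg\chi_{\mu*x}/\deg\chi_\mu$ is an explicit product and quotient of cyclotomic-type factors $q^a - q^b = q^b(q^{a-b}-1)$ and $q^k - 1$. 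From this one reads off both the multiset of roots (all roots of unity, as asserted in the excerpt) and the change in degree-plus-valuation.

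Next I would handle the second identity, which is the easier one: $a_\mu + A_\mu$ equals the degree plus the valuation of the polynomial $\deg\chi_\mu$, and from the product formula the total $q$-exponent contributed by replacing $x$ with $x+d$ in $X$ is a sum of arithmetic-progression terms. One gets a contribution $d$ times $(x+d-1) + (x+d-2) + \cdots$ minus the corresponding sum for $x$, cross-cancelled against the denominator change, and after collecting terms this telescopes to $d(n - d + s - x)$. I expect this to be a short bookkeeping argument once the normalisation of the generic degree (the power of $q$ one divides by, depending only on $|X|$ and $n$) is pinned down correctly.

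For the first identity I would invoke \cite[Proposition 9.1]{Cra} directly, as the excerpt signals. That proposition presumably expresses $C_d$ applied to a generic degree in type $A$ in $\beta$-set terms; applying it to both $\mu * x$ and $\mu$ and subtracting, the $(a+A)/d$-type main term and the ``arguments less than $2\pi/d$'' counting term each reduce to a count over the $\beta$-set. The term $2(n+1-d-x)$ should come from the $(d^\circ + v)/d$ part of $C_d$ combined with the second identity of the lemma (indeed $2(n+1-d-x+\#\{y<x\})$ is exactly $\tfrac{2}{d}(a_{\mu*x}+A_{\mu*x}-a_\mu-A_\mu)$ rewritten, since $d(n-d+s-x) = d(n-d-x) + ds$ and $s = \#\{y<x\} + 1 + \#\{x<y\}$, so some care is needed matching $s$ against $n+1-d$ via $|\mu| = n+1-d$). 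The remaining root-counting difference $\#\{y \in X : x < y < x+d\}$ must then come from the ``$\mathrm{Arg}\, z_i < 2\pi/d$'' term: the new roots introduced are $d$th roots of unity coming from the factor $q^d - 1$, of which a prescribed number have argument below $2\pi/d$, while the factors $q^y - q^x$ for $x < y < x+d$ contribute roots that cross the threshold.

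The main obstacle will be the precise bookkeeping in the first identity: keeping track of which factors in the ratio $\deg\chi_{\mu*x}/\deg\chi_\mu$ appear in the numerator versus the denominator, identifying their roots of unity with exact arguments, and counting how many fall strictly below $2\pi/d$ — all while correctly handling the normalising power of $q$ and the fact that the $\beta$-set $X$ is only assumed ``large enough'' (so the answer must be independent of padding $X$ with extra small elements, which is a consistency check worth verifying). Once \cite[Proposition 9.1]{Cra} is in hand this is, as the excerpt says, ``an easy calculation'', so I would present it as such rather than grinding through every factor.
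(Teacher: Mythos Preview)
Your proposal is correct and matches the paper's approach exactly: the paper offers no proof beyond the sentence preceding the lemma, namely that the first equality is read off from \cite[Proposition 9.1]{Cra} and the second ``follows from an easy calculation''. Your outline is in fact more detailed than what the paper provides.
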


These integers give conjecturally the degree of the cohomology group of $\X_{n,d}$ in which $\chi_{\mu * x}$ will appear, as well as the corresponding eigenvalue of the Frobenius. Since we will  work with the cohomology with compact support, we shall rather work with the integers

\centers{$ \pi_d(X,x) \,  = \, 2\big(n+x - \#\{y \in X \, | \, y < x\}\big) - \#\{y \in X \, | \, x < y < x+d\}$}

\leftcenters{and}{$ \gamma_d(X,x) \, = \, n+1+x-s.$}

\noindent They are readily deduced from the previous ones by taking into account the dimension of $\X_{n,d}$ (which is equal to $\ell(v_d) = 2n+1-d$). Now Conjecture \hyperref[broue]{1}  can be deduced from the following:

\begin{conj}\label{conjA}Let $n \geq 1$ be a positive integer and $1 \leq d \leq n+1$.  Let $\mu$ be a partition of $n-d+1$ and $X$ be its $\beta$-set, assumed to be large enough. Then 

\centersright{$ \Rgc \big(\X_{n,d},\mathcal{F}_\mu\big) \, \simeq \,  \displaystyle \bigoplus_{x \in X'} V_{\mu * x}\, [-\pi_d(X,x)\big] \otimes \qlb   \big(\gamma_d(X,x)\big)$}{}

\noindent as a complex of $G\times \langle F \rangle$-modules.

\end{conj}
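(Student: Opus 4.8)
The plan is to proceed by a double induction on the pair $(n,d)$, using the decomposition of $\X_{n,d}$ coming from \cite{Du5} (quoted as Theorem \ref{restrA}) which expresses the cohomology of (a suitable quotient of) $\widetilde\X_{n,d}$ in terms of $\widetilde\X_{n-1,d}$ and $\widetilde\X_{n-1,d-1}$. The base cases are $d=1$, where $\X_{n,1} = \X(\boldsymbol\pi_{\text{relative}})$ and the statement is the $d=1$ instance we are allowed to assume, and $n = d-1$ (or $d = n+1$), where $\mu$ is empty, $X' $ consists of a single element, and $\X_{n,n+1}$ is the Coxeter-type variety whose cohomology is known by Lusztig \cite{Lu} and by Digne--Michel \cite{DM2}; one checks directly that the combinatorial shifts $\pi_{n+1}(X,x)$ and $\gamma_{n+1}(X,x)$ match Lusztig's answer. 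I would state these reductions first and isolate exactly which $\overline{\mathbb{Q}}_\ell$-level computation is needed.

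Next I would carry out the inductive step. Restricting $\Rgc(\widetilde\X_{n,d},\qlb)$ along the Harish-Chandra parabolic and using Theorem \ref{restrA}, one gets a distinguished triangle (or direct sum, once disjointness kicks in) relating $\Rgc(\X_{n,d},\mathcal{F}_\mu)$ to $\Rgc(\X_{n-1,d},\mathcal{F}_{\mu'})$ and $\Rgc(\X_{n-1,d-1},\mathcal{F}_{\mu''})$ for the partitions $\mu',\mu''$ obtained from $\mu$ by the corresponding hook/box operations on the $\beta$-set $X$. By the inductive hypothesis both of the latter complexes are of the predicted form $\bigoplus V_{\bullet * x}[-\pi][\gamma]$. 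The combinatorial heart of the argument is then to check two things: (a) that the set $X'$ of addable $d$-hooks to $\mu$ is partitioned into those hooks that "come from" the $\X_{n-1,d}$ term and those that come from the $\X_{n-1,d-1}$ term, compatibly with the recursion on $\beta$-sets (this is where the choice of a large $\beta$-set containing $\{0,\dots,d-1\}$ is used), and (b) that the shift data is additive along the recursion: $\pi_d(X,x)$ and $\gamma_d(X,x)$ for a hook $x$ in $\X_{n,d}$ equal the appropriate sum of the $\X_{n-1,\ast}$ shift plus the explicit shift introduced by the map in Theorem \ref{restrA}. Both are finite checks on the formulas defining $\pi_d$ and $\gamma_d$, using that $C_d$ is additive and Lemma \ref{pervA}.

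The point where the triangle actually splits as a direct sum — i.e. where the degeneration of the spectral sequence / vanishing of connecting maps is forced — is the main obstacle. This is precisely the disjointness statement (i) of Conjecture 1, and it is not automatic from the geometry: it has to be deduced from the fact that the constituents $V_{\mu*x}$ landing in different cohomological degrees $\pi_d(X,x)$ are pairwise non-isomorphic $\qlb G$-modules and that the two complexes being glued contribute constituents in disjoint sets of degrees. I would prove this by showing that $x \mapsto \pi_d(X,x)$ is injective on $X'$ (a monotonicity argument: as $x$ increases in $X'$, $n+x-\#\{y\in X : y<x\}$ strictly increases, so $\pi_d$ is strictly monotone up to the correction term $\#\{y : x<y<x+d\}$, which one controls), and that the two families of degrees coming from the $\X_{n-1,d}$ and $\X_{n-1,d-1}$ pieces interleave without collision. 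Once injectivity of the degree function and disjointness of the module constituents are established, $\mathrm{Hom}$ and $\mathrm{Ext}^1$ between consecutive terms vanish for degree reasons, the triangle splits, the Frobenius eigenvalue bookkeeping is read off from the twists $\qlb(\gamma_d(X,x))$, and the induction closes. Finally I would record Corollary \ref{corA}, deducing the full Conjecture 1 in type $A$ from the $d=1$ case via the amalgamation \eqref{eqminmax} together with the additivity of $C_d$.
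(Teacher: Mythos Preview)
First, note that the statement you are trying to prove is stated in the paper as a \emph{conjecture}; the paper does not prove it outright. What the paper proves is the inductive step (Theorem~\ref{thmA}) together with the reduction to $d=1$ (Corollary~\ref{corA}), leaving the $d=1$ case as an assumption. Your proposal has the same overall shape, so the right comparison is between your inductive step and the paper's.

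There is a genuine gap in your inductive step, and it is exactly the point you flag as ``the main obstacle''. The distinguished triangle of Theorem~\ref{restrA} lives in $D^b(\qlb L_I\times\langle F\rangle\text{-mod})$: it computes the $U_I$-invariants of $\Rgc(\X_{n,d},\mathcal F_\mu)$, not the $G$-module itself. Your splitting argument (``the constituents $V_{\mu*x}$ are pairwise non-isomorphic $G$-modules in distinct degrees, so $\mathrm{Hom}$ and $\mathrm{Ext}^1$ vanish'') is about $G$-modules, but the objects in the triangle are their Harish--Chandra restrictions to $L_I$, and these overlap heavily: ${}^*\!R^G_{L_I}\chi_{\mu*x}$ and ${}^*\!R^G_{L_I}\chi_{\mu*x'}$ share many constituents. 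So injectivity of $x\mapsto\pi_d(X,x)$ on $X'$, even if true, does not force the connecting map to vanish, and it does not let you reconstruct the $G$-module from its $U_I$-invariants.

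The paper handles this differently. It separates everything by \emph{Frobenius eigenvalue} first (using $\gamma_d$, not $\pi_d$), which is much finer on the $L_I$-side, and then appeals to a small lemma saying that for $n\geq3$ a unipotent character of $G$ is determined by its Harish--Chandra restriction to $L_I$. This almost closes the argument, but not quite: there remain ``phantom'' eigenvalues $q^{n+1+x-s}$ with $x\notin X$ or $x+d\in X$, where the two ends of the triangle contribute the \emph{same} $L_I$-character in two consecutive degrees (these can only come from rectangular partitions $(a,\ldots,a)$). Your interleaving-without-collision claim is false here. To kill these phantoms the paper uses an \emph{additional} inductive hypothesis you omit: Conjecture~\ref{conjA} for $(n,d+1)$, fed into the analogous triangle for $\X_{n+1,d+1}$. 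This is why Theorem~\ref{thmA} assumes $(n,d+1)$, $(n-1,d-1)$ and $(n-1,d)$, not just the last two; your induction scheme, which descends only in $n$, cannot reach this input and will not close.
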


The purpose of this paper is to prove that this conjecture holds for any $d$ whenever it holds for $d=1$. As a byproduct, we shall deduce the cohomology of  parabolic Deligne-Lusztig varieties associated to any unipotent block from the knowledge of the cohomology of $\X(\mathbf{w}_0^2)$.

\section{Decomposition of the quotient\label{decA}}

The group $(\L_{J_d}, \dot v_d F)$ is split of type $A_{n-d}$, therefore the unipotent representations of the corresponding finite group are labelled by partitions $\mu$ of $n-d+1$. To such a partition one can associate a unipotent $\qlb$-local system  $\mathcal{F}_\mu$  on $\X_{n,d}$.  From  \cite{BMM} we know that the irreducible constituents of the virtual character $\sum (-1)^i \Hc^i(\X_{n,d},\mathcal{F}_\mu)$ correspond to the  partitions of $n+1$ obtained by adding a $d$-hook to $\mu$. The restriction to $\mathfrak{S}_n$ of the corresponding irreducible representation corresponds to a partition obtained by 

\begin{itemize}

\item either restricting the hook (usually in two different ways),
 
\item or restricting $\mu$.

\end{itemize}

The main result of this section gives a geometric interpretation of this phenomenon.

\begin{thm}\label{restrA}Assume that $d\geq2$. Let $I = \{s_j \, | \, 1 \leq j \leq n-1\}$. Let $\mu$ be a partition of $n-d+1$ and $\{\mu^{(j)}\}$ be the set of partitions of $n-d$ obtained by restricting $\mu$. Then there is a distinguished triangle  in $D^b(\qlb L_I \times \langle F \rangle$-$\mathrm{mod})$

\centers{$ \Rgc(\mathbb{G}_m \times \mathrm{X}_{n-1,d-1}, \qlb \otimes \mathcal{F}_\mu) \longrightarrow \Rgc( \X_{n,d},\mathcal{F}_\mu)^{U_I} \longrightarrow \Rgc\big(\mathrm{X}_{n-1,d},\displaystyle \bigoplus \mathcal{F}_{\mu^{(j)}}\big)[-2] (1) \rightsquigarrow$}
\end{thm}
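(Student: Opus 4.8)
The strategy is to realize $\X_{n,d}$ as fibered over a simpler variety whose base encodes the "first step" of the word $\mathbf v_d$, and then to stratify that base. Recall $\mathbf v_d = \mathbf s_1 \mathbf s_2 \cdots \mathbf s_{n-\lfloor d/2\rfloor}\, \mathbf s_n \mathbf s_{n-1}\cdots \mathbf s_{\lfloor (d+1)/2\rfloor}$ and $\mathbf J_d$ consists of the $\mathbf s_i$ with $\lfloor(d+1)/2\rfloor+1\le i\le n-\lfloor d/2\rfloor$. I want to take the $U_I$-fixed points, where $I=\{s_1,\dots,s_{n-1}\}$, so that $L_I$ is a Levi of type $A_{n-1}$. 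Following the method of \cite{Du5}, the variety $\X_{n,d}^{U_I}$ decomposes according to the relative position of a pair of flags with respect to the parabolic subgroup $\mathbf P_I$: the first simple reflection $\mathbf s_1$ (or, equivalently, a pendant letter of $\mathbf v_d$ that escapes $I$) produces a map to $\mathbb P^1$-type space, or more precisely the quotient of $\X_{n,d}$ by $U_I$ fibers over $\mathbb A^1 \smallsetminus\{0\}\cup\{0\}$ with the generic fiber and the special fiber given by smaller Deligne-Lusztig varieties. This is exactly the shape of the distinguished triangle: the open part $\mathbb G_m\times \X_{n-1,d-1}$ maps in, and the closed complement contributes $\Rgc(\X_{n-1,d},\bigoplus \mathcal F_{\mu^{(j)}})[-2](1)$, the shift and twist recording that the closed stratum has codimension one.

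Concretely, the key steps are: (1) Use the combinatorial recursion for $\mathbf v_d$ and $\mathbf J_d$ — peeling off $\mathbf s_1$ from the left and $\mathbf s_{\lfloor(d+1)/2\rfloor}$ from the right — to identify $\mathbf J_d \cap I$-conjugates and to see that, after passing to $U_I$-invariants, the variety $\X(\mathbf J_d,\mathbf v_d F)^{U_I}$ maps $L_I$-equivariantly to a one-parameter family. The fiber over the open point $\mathbb G_m$ should be (isomorphic to) $\mathbb G_m\times \X_{n-1,d-1}$: removing $\mathbf s_1$ from $\mathbf v_d$ and shifting the whole word by one index turns $(\mathbf J_d,\mathbf v_d)$ in type $A_n$ for the parameter $d$ into $(\mathbf J_{d-1},\mathbf v_{d-1})$ in type $A_{n-1}$ for the parameter $d-1$, with the extra $\mathbb G_m$ factor coming from the torus direction. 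The fiber over $0$ should be $\X_{n-1,d}$ with the local system $\bigoplus_j \mathcal F_{\mu^{(j)}}$, the direct sum accounting for the various ways $\mu$ restricts to $\mathfrak S_{n-d}$ — this is precisely the "restricting $\mu$" branch mentioned before the theorem. (2) Invoke the long exact sequence (open–closed decomposition) in $\ell$-adic cohomology with compact support for this stratification, which is functorial for the $L_I\times\langle F\rangle$-action, to produce the distinguished triangle; the Tate twist $(1)$ and the degree shift $[-2]$ on the closed piece come from the purity/Gysin contribution of the codimension-one closed stratum, together with bookkeeping of $\dim\X_{n,d}=2n+1-d$ versus $\dim\X_{n-1,d}$. (3) Match up the local systems: one must check that the pullback of $\mathcal F_\mu$ along the fibration restricts on the open stratum to $\qlb\boxtimes\mathcal F_\mu$ (same $\mu$, since $d-1$-Harish-Chandra data for the smaller group inherit it) and on the closed stratum to $\bigoplus_j\mathcal F_{\mu^{(j)}}$; this uses the description of $\mathcal F_\mu$ via the étale covering $\widetilde\X_{n,d}$ with Galois group $\L_{J_d}^{t(\mathbf v_d)F}$ and the compatibility of these coverings under the parabolic restriction.

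The main obstacle is step (3) together with the precise geometric identification of the two fibers — that is, showing that $U_I$-invariants of the parabolic Deligne-Lusztig variety genuinely fiber in this way, with the stated fibers and the stated local systems, rather than merely at the level of virtual characters. One has to work with the Digne–Michel model of $\widetilde\X(\mathbf J_d,\mathbf v_d F)$ as a variety of sequences of parabolic subgroups (or the amalgamated-product description), track how the Galois group $\L_{J_d}^{t(\mathbf v_d)F}$ changes when one removes the pendant generator, and verify that the map to $\mathbb A^1$ is well-defined and $L_I\times\langle F\rangle$-equivariant. The second delicate point is the exact bookkeeping of shifts and Tate twists on the closed stratum: one must confirm that the closed stratum is smooth of codimension one in a suitable compactified or intermediate variety so that the Gysin triangle applies cleanly, which in \cite{Du5} is handled by working with the relevant Bott–Samelson type resolution — I would follow that template, isolating the single letter $\mathbf s_1$ and applying the known behaviour of $\Rgc$ under the corresponding $\mathbb A^1$-bundle and its zero section. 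Once these geometric inputs are in place, assembling the distinguished triangle is formal.
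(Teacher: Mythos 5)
Your overall strategy coincides with the paper's: stratify $\widetilde\X(\mathbf J_d,\mathbf v_d F)$ by relative position with respect to $\P_I$ (i.e. by the locally closed $\P_I$-subvarieties $\X_x$ indexed by $I$-reduced-$J_d$ elements $x$), compute the $U_I$-quotient of each piece via \cite[Propositions 3.2 and 3.4]{Du5}, and read off the distinguished triangle from the open-closed decomposition. However, two points in your write-up are more than cosmetic.

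First, the ``fibration over $\mathbb A^1$'' picture implicitly assumes that the stratification has exactly two nonempty pieces. This is the crux of the paper's argument and is genuinely nontrivial: there are a priori many $I$-reduced-$J_d$ elements $x$ (namely $s_n s_{n-1}\cdots s_i$ for $i$ in a range of values), and the paper devotes a lemma to showing, via a careful analysis of $x$-distinguished subexpressions of $y v_d$ for $y\in W_{J_d}$, that only $x_0 = s_n\cdots s_1$ and $x_1 = s_n\cdots s_{n-\lfloor d/2\rfloor+1}$ yield a nonempty $\X_x$. Without this, you have no control over the number of strata, and the triangle you want would be replaced by a longer filtration. Your proposal states the two-piece structure as if it were evident; it is not, and it is where most of the work of the proof actually lives.

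Second, the $[-2](1)$ on the closed piece does \emph{not} arise from a Gysin map or purity for a codimension-one closed stratum. The open-closed exact sequence $\Rgc(U)\to\Rgc(X)\to\Rgc(Z)\to[1]$ introduces no shift or twist on its own; what produces $[-2](1)$ is that the quotient $U_I\backslash\widetilde\X_{x_1}$, computed via \cite[Proposition 3.2]{Du5}, is an affine-line bundle over $\widetilde\X_{\L_I}(K_{x_1},\dot v F)$, so $\Rgc$ of the closed stratum is $\Rgc$ of the smaller Deligne-Lusztig variety tensored with $\Rgc(\mathbb A^1)$. The $\mathbb G_m$ on the open piece likewise comes from \cite[Proposition 3.4]{Du5} rather than from a base of a putative fibration. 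You should also be aware that $d=2$ requires separate treatment (the paper rewrites $v_2$ via an isomorphism with a different parabolic Deligne-Lusztig variety before applying \cite[Proposition 3.4]{Du5}), since the general-position argument used for $d>2$ breaks down there.
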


\begin{rmk}\label{rmkindep}From \cite[Proposition 1.1]{BR2} we can deduce that the cohomology of a Deligne-Lusztig variety with coefficients in a unipotent local system depends only on the type of $(\G,F)$. Therefore there is no ambiguity in the statement of the theorem.
\end{rmk}

We will use the results in \cite{Du5} to compute the quotient of $\widetilde \X(J_d, \dot v_dF)$ by the finite group $U_I$. Recall that $\X_{n,d} = \X(J_d,v_dF)$ can be decomposed into locally closed $P_I$-subvarieties $\X_x$, where $x$ is a $I$-reduced-$J_d$ element of $W$. In our situation, at most two pieces will appear:

\begin{lem} Assume that $2 \leq d \leq n$. The variety $\X_x$ is non-empty if and only if $x$ is one of the two following elements:

\begin{itemize}

\item[(1)] $ x_0 = s_n s_{n-1} \cdots s_1$

\item[(2)] $ x_1 = s_n s_{n-1} \cdots s_{n-\lfloor\frac{d}{2}\rfloor +1}$.

\end{itemize}
\end{lem}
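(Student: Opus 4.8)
The plan is to analyze the Bruhat-type decomposition of $\X_{n,d} = \X(\mathbf{J}_d,\mathbf{v}_dF)$ into locally closed $P_I$-stable pieces $\X_x$, indexed by the $I$-reduced-$\mathbf{J}_d$ double coset representatives $x \in W$, and to show that only the two displayed elements contribute a non-empty piece. Recall that $\X_x$ is non-empty exactly when $x$ is compatible with the Deligne-Lusztig data in the sense used in \cite{Du5}: concretely, $\X_x \neq \emptyset$ if and only if the length-additivity condition relating $x$, the parabolic $\mathbf{J}_d$, and the element $\mathbf{v}_d F$ is satisfied — that is, one needs $x$ to be $I$-reduced-$J_d$ and the product $x \cdot {}^x(v_dF)(\ldots)$ or the relevant reduced-word condition to hold. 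So the first step is to recall precisely from \cite{Du5} the non-emptiness criterion for $\X_x$ in the parabolic setting, phrased in terms of $W$, $I$, $J_d$ and $v_d$.

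Next I would enumerate the $I$-reduced-$J_d$ elements. Since $I = \{s_1,\ldots,s_{n-1}\}$ has corank one in $W = \mathfrak{S}_{n+1}$, the cosets $W_I \backslash W$ are indexed by a single "Schubert cell" parameter, and the minimal-length representatives of $W/W_{J_d}$ on the other side are the ascending chains $s_n s_{n-1}\cdots s_k$ for various $k$ (together with the identity). Intersecting the two conditions, the $I$-reduced-$J_d$ elements form a short explicit list $x_k = s_n s_{n-1}\cdots s_k$ for $k$ ranging over a controlled interval, plus possibly $1$. The combinatorial heart is then to check, for each such $x_k$, whether the non-emptiness condition from \cite{Du5} holds; this reduces to a length computation $\ell(x_k) + \ell(\text{something}) = \ell(x_k\,(\ldots))$ which one verifies using the explicit reduced word $\mathbf{v}_d = \mathbf{s}_1\mathbf{s}_2\cdots \mathbf{s}_{n-\lfloor d/2\rfloor}\mathbf{s}_n\mathbf{s}_{n-1}\cdots\mathbf{s}_{\lfloor (d+1)/2\rfloor}$ and the definition of $\mathbf{J}_d$. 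The claim is that this forces $k \in \{1,\, n-\lfloor d/2\rfloor+1\}$, giving exactly $x_0$ and $x_1$; the hypothesis $2 \leq d \leq n$ is what guarantees these two values are distinct and that no other $k$ works (for $d=1$ or $d=n+1$ the count would differ, which is consistent with those cases being excluded or degenerate).

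Concretely the argument runs as follows. An element $x$ is $I$-reduced on the left iff $x^{-1}$ has no descent in $I$, i.e. $x^{-1}(1) < x^{-1}(2) < \cdots < x^{-1}(n)$, which pins $x$ down to one of the $n+1$ cycles $s_n s_{n-1}\cdots s_k$ (with $k=n+1$ meaning $x=1$). Being $J_d$-reduced on the right is automatic for a fully-reduced ascending cycle unless it meets $J_d$, which cuts the list to those $k$ with $k \notin \{ \lfloor (d+1)/2\rfloor+1, \ldots, n-\lfloor d/2\rfloor\}$ or adjacent — one checks $s_n\cdots s_k$ is $J_d$-reduced iff $k > n - \lfloor d/2\rfloor$ or $k=1$-ish, giving the candidates $k \in \{1\} \cup \{n-\lfloor d/2\rfloor+1, \ldots, n+1\}$. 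Finally the geometric non-emptiness condition of \cite{Du5} — that $x$ conjugates the standard parabolic data compatibly with $\mathbf{v}_dF$, equivalently $x^{-1}\mathbf{J}_d x$ together with the twisting element has the right length — eliminates all but $x_0 = s_n\cdots s_1$ and $x_1 = s_n\cdots s_{n-\lfloor d/2\rfloor+1}$. I expect the main obstacle to be this last verification: correctly transcribing the $\X_x \neq \emptyset$ criterion from \cite{Du5} into the symmetric-group combinatorics and then doing the length bookkeeping without sign or floor-function errors, especially tracking how $F$ (which may act nontrivially via the diagram automorphism in type $A$, though here $F$ is standard) interacts with the word for $\mathbf{v}_d$. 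Once the criterion is pinned down, the elimination is a finite, if slightly intricate, case check on the floor-function intervals.
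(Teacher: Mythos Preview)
Your overall architecture matches the paper's: enumerate the $I$-reduced-$J_d$ representatives (the cycles $s_n s_{n-1}\cdots s_i$), then eliminate all but two. Your enumeration is essentially correct and agrees with the paper's list: with $a=\lfloor\tfrac{d+1}{2}\rfloor$, $b=n-\lfloor\tfrac{d}{2}\rfloor$, the candidates are $x=s_n\cdots s_i$ with $1\le i\le a$ or $b+1\le i\le n+1$.

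The genuine gap is the elimination step. You describe the non-emptiness criterion from \cite{Du5} as a ``length-additivity condition'' or a ``length computation'', and expect the final check to be length bookkeeping. That is not what the criterion is. The relevant statement in \cite{Du5} is that $\X_x\neq\emptyset$ if and only if there exist $y\in W_{J_d}$ and an \emph{$x$-distinguished subexpression} $\gamma$ of the word $yw$ (here $w=v_d$) whose product lies in the twisted subgroup $(W_I)^x$. This is a Deodhar/Mars--Springer type condition on subexpressions, not a length identity, and it does not reduce to anything like $\ell(x)+\ell(\cdot)=\ell(x\cdot)$.

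The paper's actual elimination proceeds by computing $(W_I)^x=\langle s_1,\dots,s_{i-2},\,s_is_{i-1}s_i,\,s_{i+1},\dots,s_n\rangle$ and then showing, case by case, that for $i>b+1$ any distinguished subexpression of $w$ is forced to pick up $s_i$ or $s_{i-1}$ (hence leaves $(W_I)^x$), and that for $2\le i\le a$ the subexpression is forced to begin with $s_1,\dots,s_{i-1}$ and can never land back in $(W_I)^x$. These are short but delicate arguments about how the distinguished condition propagates through the reduced word for $v_d$; your proposal does not contain this mechanism, and a length calculation alone would not separate $i=1$ from $i=2,\dots,a$ or $i=b+1$ from $i=b+2,\dots,n+1$. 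You should replace the ``length bookkeeping'' paragraph with the correct subexpression criterion and carry out that analysis.
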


\begin{proof} For simplicity, we shall denote $a = \lfloor\frac{d+1}{2}\rfloor$ and $b = n-\lfloor\frac{d}{2}\rfloor$ so that $w = v_d = s_1 \cdots s_b s_n \cdots s_a$ and $J = J_d= \{a+1,\ldots,b\}$. If $x$ is a $I$-reduced-$J$ element, then $x = s_n s_{n-1} \cdots s_i$ with $b+ 1 \leq i \leq n+1$ or $1 \leq i \leq a$. Recall from \cite{Du5} that the variety $\X_x$ is non-empty if and only if there exists $y = y_1\cdots y_r \in W_J$  and an $x$-distinguished subexpression $\gamma$ of $yw$ such that the products of the elements of $\gamma$ lies in $(W_I)^x$. We first observe that for $i \notin \{1, n+1\}$ we have

\centers{$ (W_I)^x = \langle s_1,\ldots,s_{i-2}, s_i s_{i-1} s_i, s_{i+1}, \ldots,s_n\rangle$.}

\noindent Now, since $x$ is reduced-$J$, the subexpression $\gamma$ is the concatenation of $(y_1,\ldots,y_r)$ and an $xy$-distinguished subexpression $\tilde \gamma$ of $w$. If $i > b+1$ or $2 \leq i \leq a$, the group $W_J$ is included in $(W_I)^x$. Therefore the product of the elements of $\tilde \gamma$ must lie in $(W_I)^x$. We shall distinguish two cases:

\mk

\noindent \textbf{Case (1).} We assume that $ i > b+1$. In that case $x$ commutes with any element of $W_J$, so that $\tilde \gamma$ is an $yx$-distinguished subexpression of $w$.  Then

\begin{itemize}

\item if $x$ is trivial (that is if $i = n+1$), then any $y$-distinguished subexpression of $w$ contains necessarily $s_n$ and hence cannot produce any element of $(W_I)^x = W_I$ ;

\item if $x$ is non-trivial then $i \leq n$, and a subexpression of $w$ lies in $(W_I)^x$ if and only if it does not contain $s_i$ or $s_{i-1}$. However, such a subexpression will never be $yx$-distinguished since for all $v$ in $W_I$ we have $vxs_{i-1} > vx$.

\end{itemize} 

\noindent We deduce that the variety $\X_x$ is empty in this case.

\mk

\noindent \textbf{Case (2).} We assume that $2 \leq i \leq a$. The subexpression $\tilde \gamma$ is  ${}^x y x$-distinguished.  Since $i \leq a$,  we have ${}^x W_J = W_{a, \ldots,b-1} $. For $j<i-1$, we have $xs_j = s_j x$; moreover,  $xs_{i-1}$ is $I$-reduced, so that $\tilde \gamma$ should start with $(s_1,\ldots,s_{i-1})$. In that case, the product of the elements of $\tilde \gamma$ cannot belong to $(W_I)^x$. Indeed, a subexpression of $s_{i-1} s_i \cdots s_b s_n \cdots s_a$ starting with $s_{i-1}$ will never give an element of $(W_I)^x$, the only non-trivial situation being the case $a =i$:

\begin{itemize}

\item with the notation in \cite[Section 2.1.2]{DMR} we have $s_{a-1} \underline{s_{a+1} \cdots s_b s_n \cdots s_a} = \underline{s_{a+1}} $ $ \underline{ \cdots s_b s_n \cdots s_{a+1}} s_{a-1} \underline{s_a}$ and neither $s_{a-1}$ nor $s_{a-1} s_a$ belongs to $(W_I)^x$; 

\item $s_{a-1} s_a \underline{s_{a+1} \cdots s_b s_n \cdots s_a} = (s_{a-1} s_a s_{a-1})\,  s_{a-1} \underline{s_{a+1} \cdots s_b s_n \cdots s_{a}}$ and we are back to the previous case.

\end{itemize}

\noindent This forces the variety $\X_x$ to be empty.
\end{proof}

\begin{proof}[Proof of the Theorem] From the previous lemma we deduce that $ \widetilde \X(J_d,v_dF)$ decomposes as a disjoint union  $ \widetilde \X(J_d,v_dF) \, = \, \widetilde \X_{x_0} \cup \widetilde \X_{x_1}$ with $\widetilde \X_{x_0}$ being open. Using \cite{Du5} we shall now determine the cohomology of the quotient of each of these varieties by $U_I$. Throughout the proof, we will denote $\widetilde I = \{s_2, \ldots, s_n\} \subset S$ the conjugate of $I$ by $w_0$.

\sk

When $x = x_0 = w_I w_0$ and $d >2$ we are in the situation of \cite[Proposition 3.4]{Du5}. Indeed, $v_d = s_1 w'$ with $w' \in W_{2,\ldots,n}$ and $s_1$ commutes with $W_{J_d} \subset W_{3,\ldots, n-1}$ so that we obtain

 \centers{$\Rgc\big(U_I \backslash \widetilde \X_{x_0} \, /   N , \qlb\big)\, \simeq \, \Rgc\big(\mathbb{G}_m \times \widetilde \X_{\L_{ I}}(K_{x_0},\dot vF) \, /\,   {}^{\dot x_0} N', \qlb\big) $}
 
 \noindent with $v = {}^{x_0} w'$ and $K_{x_0} = I \cap {}^{x_0}\Phi_{J_d} = {}^{x_0} J_d$. For simplicity, we shall rather consider the conjugate by $x_0$ of the right-hand side
 
 \sk
 
  Recall that $N$ and $N'$ are normal subgroup of $\L_{J_d}$ and are both contained in $\T$. In particular, any unipotent character of $\L_{J_d}^{wF}$ (resp. of $\L_{J_d}^{w'F}$) is trivial on $N$ (resp. $N'$). Consequently, for any unipotent character $\chi$ of $\L_{J_d}^{wF}$ we obtain the following quasi-isomorphism of complexes of $L_I \times \langle F \rangle$-modules:
 
  \centers{$\Rgc\big(U_I \backslash \widetilde \X_{x_0}, \qlb\big)_{\chi} \, \simeq \, \Rgc\big(\mathbb{G}_m \times \widetilde \X_{\L_{ I}}({}^{x_0} J_d,\dot vF), \qlb\big)_{{}^{\dot x_0}\chi}. $}
 
 \noindent Finally, we observe that the varieties $\X_{\L_{ I}}({}^{x_0} J_d,\dot vF)$ and $\X_{n-1,d-1}$ have the same cohomology with coefficients in any unipotent local system. Indeed, if we denote $(s_1, \ldots ,s_{n-1})$ by $(t_1,\ldots,t_{n-1})$ if $d$ is odd or by $(t_{n-1},\ldots,t_1)$ if $d$ is even, then we have
 
\centers{$v =  t_{1}  t _2 \cdots  t_{n-1-\lfloor \frac{d-1}{2}\rfloor}  t_{n-1} \dot t_{n-2} \cdots  t_{\lfloor \frac{d}{2}\rfloor} $}
 
 \noindent which corresponds to the element $v_{d-1}$ in the Weyl group $W_{ I} = \langle t_1, \ldots, t_{n-1}\rangle $ of type $A_{n-1}$. 
 
 \sk
 
 When $x= x_0$ and $d=2$, we can write $v_2 = w w'$ with $w= s_n s_{n-1} \cdots s_2$ and $w' =  s_1 s_2 \cdots s_n = s_1 w''$ so that $\X_{n,2} \simeq \X\big(\{\mathbf{s}_2, \ldots ,\mathbf{s}_{n-1}\}, \mathbf{w}\mathbf{w'}F\big)$. Moreover, via this isomorphism we have 
 
 \centers{$ \X_{x_0} \simeq \displaystyle \bigcup_{y \in W} \X_{(x_0,y)}.$}
  
\noindent We claim that $\X_{(x_0,y)}$ is empty unless $y \in W_I w_0 W_{J_2'}$ where $J_2' = J_2^w =  \{s_3,s_4,\ldots,s_n\}$. The piece $\X_{(x_0,y)}$ consists of  pairs $(px_0 \P_{J_2}, p'y\P_{J_2'})$ with $p,p' \in \P_I$ such that $p^{-1} p' \in x_0 \P_{J_2} w \P_{J_2'} y^{-1}$ and $p'^{-1} \, {}^F p \in y \P_{J_2'} w' \P_{J_2} x_0^{-1}$. In particular, if  $\X_{(x_0,y)}$ is non-empty then the double coset $\P_I y  \P_{J_2'}$ has a non-trivial intersection with  $x_0 \B w$. But $x_0 = w_I w_0$ is reduced-$\widetilde I$ so that $\ell(x_0 w ) = \ell(x_0) + \ell(w)$ and $x_0 \B w \subset \P_I w_0 \P_{J_2'}$. This forces $y$ to lie in $ W_I w_0 W_{J_2'}$. Note that $w_0 (J_2') \subset I$ so that the minimal element in this coset is $x_0$ and we have $\X_{x_0} \simeq \X_{(x_0,x_0)}$.

\sk

  Now $s_1$ commutes with $J_2'$ and we can apply \cite[Proposition 3.4]{Du5} to obtain, after conjugation by $x_0$:

\centers{$\Rgc\big(U_I \backslash \widetilde \X_{x_0} \, /   N , \qlb\big)\, \simeq \, \Rgc\big(\mathbb{G}_m \times \widetilde \X_{\L_{ I}}(\mathbf{K}_{x_0},\mathbf{v} \mathbf{v}'F) \, /\,   {}^{\dot x_0} N', \qlb\big). $}
 
\noindent with $K_{x_0} = {}^{x_0} J_2$,  $v = {}^{x_0} w$ and $v' = {}^{x_0} w''$. If we denote $(s_1,\ldots,s_{n-1})$ by $(t_{n-1},\ldots,t_1)$ we obtain  ${}^{x_0} J_2 = \{t_2, \ldots,t_{n-1}\}$  and $\mathbf{vv}' = \mathbf{t}_1 \cdots \mathbf{t}_{n-1} \mathbf{t}_{n-1} \cdots \mathbf{t}_1$ so that the pair $(\mathbf{K}_{x_0},\mathbf{vv}')$ corresponds to $(\mathbf{J}_1, \mathbf{v}_1)$ in  the Weyl group $W_{I}$ of type $A_{n-1}$. As before, $N$ and $N'$ do not play any role if we consider the unipotent part of the previous quasi-isomorphism.

\sk

When $x = x_1$ we use \cite[Proposition 3.2]{Du5}: the conjugate of $v_d$ by $x_1$ is 

\centers{$  v = x_1v_dx_1^{-1} \, = \, s_1 s_2 \cdots s_{n-1-\lfloor \frac{d}{2}\rfloor}  s_{n-1} s_{n-2} \cdots s_{\lfloor \frac{d+1}{2}\rfloor}  $}

\noindent which corresponds exactly to the element $v_d$ in $W_I$. We can therefore identify the cohomology of the varieties $\X_{\L_I}(K_{x_1}, vF)$ and $\X_{n-1,d}$ with coefficients in any unipotent local system (see Remark \ref{rmkindep}). The group $\P_I \cap {}^{x_1} \L_{J_d}$ is a $\dot v F$-stable parabolic subgroup of ${}^{x_1} \L_{J_d}$ and $\L_{K_{x_1}} = \L_I \cap {}^{x_1} \L_{J_d}$ is a stable Levi complement. Therefore it makes sense to consider the Harish-Chandra restriction ${}^*\mathrm{R}_{K}^{J} \, \chi$ of any unipotent character $\chi$ of $\L_{J_d}^{\dot v_d F} \simeq ({}^{x_1} \L_{J_d})^{\dot v F}$ to $\L_{K_{x_1}}^{\dot v F}$. From \cite[Proposition 3.2]{Du5} (see also \cite[Remark 3.12]{Du5}) we deduce the following quasi-isomorphism

\centers{$ \Rgc\big(U_I \backslash \widetilde \X_{x_1},\qlb\big)_{\chi} [2](-1) \, \simeq \,   \Rgc\big( \widetilde \X_{\L_I}(K_{x_1}, \dot vF),\qlb\big)_{{}^*\mathrm{R}_{K}^{J} \, \chi}$.}

\sk

Let $\mu$ be a partition of $n-d+1$. The cohomology of the variety $\X_{n,d}$ with coefficients in the local system $\mathcal{F}_\mu$ is given by 

\centers{$ \Rgc(\X_{n,d}, \mathcal{F}_\mu) \, \simeq \, \Rgc\big(\widetilde \X(\mathbf{J}_d,\mathbf{v}_dF), \qlb\big)_{\chi_\mu}$}

\noindent where $\chi_\mu$ is the unipotent character of $\L_{J_{d}}^{\dot v_dF}$ corresponding to the partition $\mu$. Since $(\L_{K_{x_1}}, \dot v F)$ is a split group of type $A_{n-d-1}$, the Harish-Chandra restriction of $\chi_\mu$ from $\L_{J_{d}}^{\dot wF} \simeq ({}^{x_1} \L_{J_d})^{\dot v F}$ to $\L_{K_{x_1}}^{\dot vF}$ is the sum of the $\chi_{\mu_i}$'s where the $\mu_i$'s are the partitions of $n-d$ obtained by restricting $\mu$. With with description, we get the following isomorphisms in $D^b(\qlb L_I \times \langle F \rangle$-$\mathrm{mod})$

\centers{$ \Rgc\big(U_I \backslash \widetilde \X_{x_0}, \qlb\big)_{\chi_\mu} \, \simeq \, \Rgc\big(\mathbb{G}_m \times  \X_{n-1,d-1}, \qlb \otimes \mathcal{F}_\mu \big)  $}

\leftcenters{and}{$ \Rgc\big(U_I \backslash \widetilde \X_{x_1},\qlb\big)_{\chi_\mu}  \, \simeq\,   \Rgc\big( \X_{n-1,d},\displaystyle \bigoplus \mathcal{F}_{\mu_i}\big)[-2](1).$}

\noindent We conclude using the distinguished triangle associated to the decomposition $\widetilde \X_{n,d} = \widetilde \X_{x_0} \cup \widetilde \X_{x_1}$ in which $\widetilde \X_{x_0}$ is open.
\end{proof}

\section{Cohomology over \texorpdfstring{$\qlb$}{Ql}}

We have just seen how to relate the Harish-Chandra restriction of the cohomology of $\X_{n,d}$ to the cohomology of smaller parabolic Deligne-Lusztig varieties. We shall now explain how this strategy provides an inductive method for a thorough determination of the cohomology of $\X_{n,d}$ with coefficients in any unipotent local system. The main result in this section gives an inductive strategy towards a proof of Conjecture \hyperref[broue]{1}:

\begin{thm}\label{thmA}Let $n \geq 1$ and $2 \leq d \leq n$.  If Conjecture \ref{conjA} holds for $(n,d+1)$,  $(n-1,d-1)$ and $(n-1,d)$ then it holds for $(n,d)$.
\end{thm}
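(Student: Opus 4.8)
The plan is to establish the statement for every partition $\mu\vdash n-d+1$: I would compute the Harish-Chandra restriction of $\Rgc(\X_{n,d},\mathcal{F}_\mu)$ via Theorem \ref{restrA}, and then reconstruct the $\G^F$-module structure from it.

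First I would fix $\mu$ together with a large enough $\beta$-set $X$ and apply Theorem \ref{restrA} with $I=\{s_1,\dots,s_{n-1}\}$, so that $\L_I$ is split of type $A_{n-1}$. As $(-)^{U_I}$ is exact on $\qlb$-modules, this computes, for every $i$, the module $\Hc^i(\X_{n,d},\mathcal{F}_\mu)^{U_I}={}^*\mathrm{R}_{\L_I}^\G\,\Hc^i(\X_{n,d},\mathcal{F}_\mu)$ inside a distinguished triangle whose outer terms are $\Rgc(\mathbb{G}_m,\qlb)\otimes\Rgc(\X_{n-1,d-1},\mathcal{F}_\mu)$ and $\bigoplus_j\Rgc(\X_{n-1,d},\mathcal{F}_{\mu^{(j)}})[-2](1)$. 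By hypothesis, Conjecture \ref{conjA} holds for $(n-1,d-1)$ and $(n-1,d)$; combined with $\Rgc(\mathbb{G}_m,\qlb)\simeq\qlb\oplus\qlb[-2](-1)$ this turns both outer terms into explicit direct sums $\bigoplus_{\lambda\vdash n}V_\lambda[-c](e)$, with completely explicit degrees $c$ and twists $e$ (in terms of $\pi_{d-1},\gamma_{d-1},\pi_d,\gamma_d$, shifted as above). Next I would check that the connecting map of the triangle vanishes: both outer complexes are formal with explicitly known weights, and although a given $V_\lambda$ may occur in each of them, a direct computation with the formulas of Lemma \ref{pervA} shows that the pair $(\text{degree},\text{twist})$ carried by $V_\lambda$ in the first term is never the one needed to form a non-split extension with an occurrence of $V_\lambda$ in the shifted third term. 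Hence $\Rgc(\X_{n,d},\mathcal{F}_\mu)^{U_I}$, with its $F$-action, is the direct sum of the two outer complexes.

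The last and hardest step is to lift this to $\G^F$. Here I would use: (i) the virtual character $\sum_i(-1)^i\Hc^i(\X_{n,d},\mathcal{F}_\mu)=\sum_{x\in X'}\varepsilon_x\chi_{\mu*x}$ of \cite{BMM}; (ii) that every unipotent $\G^F$-module has non-zero Harish-Chandra restriction to $\L_I$, so that $(-)^{U_I}$ is faithful and exact on unipotent modules and therefore detects the $F$-action (in particular its semisimplicity and eigenvalues) on each isotypic component; and (iii) the injectivity of $x\mapsto\gamma_d(X,x)$. Combining (ii)–(iii), since $\Hc^\bullet(\X_{n,d},\mathcal{F}_\mu)^{U_I}$ has just been computed to be pure with Frobenius acting by $q^{\gamma_d(X,x)}$ on its $V_{(\mu*x)^-}$-parts, $F$ must act on the $V_{\mu*x}$-isotypic part of each $\Hc^i(\X_{n,d},\mathcal{F}_\mu)$ by the scalar $q^{\gamma_d(X,x)}$. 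It then remains to read off the unique degree in which each $V_{\mu*x}$ occurs by splitting $\Hc^\bullet(\X_{n,d},\mathcal{F}_\mu)^{U_I}$ according to Frobenius eigenvalue and matching against Lemma \ref{pervA}; once the degrees are identified, the virtual character in (i) shows nothing else occurs, which gives Conjecture \ref{conjA} for $(n,d)$.

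The main obstacle is precisely this disentangling: restriction to a single Levi does not determine a unipotent $\G^F$-module, so one genuinely has to combine it with the Frobenius weights and with smaller instances of the conjecture. In particular I expect the combinatorial bookkeeping at the end — treating the constituents $\mu*x$ whose $d$-hook meets the boundary of $\mu$, so that $(\mu*x)^-$ overlaps the restrictions of several other $\mu*x'$ — to be where the remaining hypothesis, the case $(n,d+1)$, is used, in order to anchor the induction on $x$ and separate those overlapping contributions. Everything else (the application of Theorem \ref{restrA}, the vanishing of the connecting map, and the reductions of $\pi_d,\gamma_d$ against Lemma \ref{pervA}) I expect to be routine once the set-up is in place.
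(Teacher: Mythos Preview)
Your overall strategy --- apply Theorem \ref{restrA}, plug in the hypotheses for $(n-1,d-1)$ and $(n-1,d)$ to make both outer terms explicit, separate by Frobenius eigenvalue, and then lift to $\G^F$ --- is exactly the paper's. The genuine gap is your claim that the connecting morphism of the triangle vanishes: it does not, and this is precisely the heart of the proof.

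Compute the $q^{n+1+x-s}$-eigenspace when $x\notin X$ but $x+1\in X$ and $x+d\notin X$. On the right-hand term, with the restriction $\mu^{(j)}$ given by $x_j=x+1$, Conjecture \ref{conjA} for $(n-1,d)$ produces one copy of the character with $\beta$-set $(X\smallsetminus\{x+1\})\cup\{x+d\}$ in degree $2+\pi_d(X^{(j)},x)=\pi_d(X,x)+1$. On the left-hand term, Conjecture \ref{conjA} for $(n-1,d-1)$ applied at $x+1$, tensored with $\Hc^1(\mathbb{G}_m)$, produces the \emph{same} character with the \emph{same} twist in degree $1+\pi_{d-1}(X,x+1)=\pi_d(X,x)+2$. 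So the (degree, twist) pairs do match in adjacent degrees, contrary to what you assert, and the connecting map between them is allowed to be nonzero. If it vanished, $\Hc^\bullet(\X_{n,d},\mathcal{F}_\mu)^{U_I}$ would carry this character twice in consecutive degrees; since the only unipotent characters of $\G^F$ whose Harish-Chandra restriction to $\L_I$ is isotypic are the $\chi_{(a,\ldots,a)}$, you would be forced to insert a spurious $\chi_{(a,\ldots,a)}$ into the cohomology of $\X_{n,d}$. The role of the hypothesis on $(n,d+1)$ is exactly to exclude this, i.e.\ to show that the connecting map here is an \emph{isomorphism}: one inserts $\X_{n,d}$ into the analogous triangle for $\X_{n+1,d+1}$ and uses the known $(n,d+1)$ term to reach a contradiction. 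A symmetric situation arises when $x\in X$ and $x+d\in X$. Thus the $(n,d+1)$ assumption is not a bookkeeping device for ``disentangling overlapping restrictions'' at the end; it is what makes the spurious eigenspaces vanish.

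Two smaller points. First, $\Rgc(\mathbb{G}_m,\qlb)\simeq\qlb[-1]\oplus\qlb(-1)[-2]$, not $\qlb\oplus\qlb(-1)[-2]$; the missing shift matters in the degree computations above. Second, faithfulness of $(-)^{U_I}$ is not enough to lift: you need that Harish-Chandra restriction to $\L_I$ is actually injective on unipotent characters of $\G^F$, which is a separate lemma (and fails for $n\leq 2$, where an ad hoc argument is needed).
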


Note that  we already know  from \cite{Lu} that  Conjecture \ref{conjA} holds in the Coxeter case, corresponding to $(n,n+1)$. Therefore $d=1$ is the only limit case. But $\boldsymbol \pi = \mathbf{w}_0^2$ is a maximal $1$-periodic element  in the sense of \cite{DM3} and in that specific case, a general conjecture for the cohomology has been already formulated in \cite{BMi2}: a unipotent character $\chi_\lambda$ can appear in $\Hc^i(\X(\boldsymbol \pi))$ for $i = 4 \nu_\G - 2 A_\lambda$ only, where $\nu_\G$ is the number of positive roots. An important consequence of  Theorem \ref{thmA} is that knowing the cohomology of $\X(\boldsymbol \pi)$ is sufficient for determining all the other interesting cases:

\begin{cor}\label{corA}For groups of type $A$, Conjecture \hyperref[broue]{1} holds for any $d \geq 1$ as soon as it holds for $d=1$.
\end{cor}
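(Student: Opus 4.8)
The plan is to reduce Corollary~\ref{corA} to the local statement Conjecture~\ref{conjA} and then to run the inductive step of Theorem~\ref{thmA} starting from $d=1$. Two independent ingredients are needed. First, the reduction already set up in Section~1 --- the amalgamation \eqref{eqminmax} together with the construction in Section~1.2 of a $d$-periodic element $(\mathbf{I},\mathbf{w})$ attached to a prescribed $d$-cuspidal pair --- shows that the truth of Conjecture~\ref{conjA} for \emph{all} $(n,d)$ implies Conjecture~\hyperref[broue]{1} for all $d$; applied in the case $d=1$, $\nu=\emptyset$, this same reduction identifies Conjecture~\ref{conjA} for $d=1$ (at every rank) with the hypothesis of the corollary, namely the predicted description of $\Rgc(\X(\boldsymbol \pi),\qlb)$ (inverting \eqref{eqminmax} rank by rank is routine, since over $\qlb$ the intermediate group algebras are semisimple and every unipotent character occurs in the cohomology of $\X(\boldsymbol \pi)$). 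Second, Theorem~\ref{thmA}, together with Lusztig's computation \cite{Lu} of the Coxeter case, propagates Conjecture~\ref{conjA} from $d=1$ to all $d$. Combining the two yields the corollary (and hence Theorem~\ref{mainthm}).

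For the propagation I would use a double induction: an outer induction on $n$ and, for each fixed $n$, an inner \emph{downward} induction on $d$ from $n+1$ down to $2$. The two limit cases are settled at once: $d=n+1$ is the Coxeter case, known by \cite{Lu}, and $d=1$ is the hypothesis. For $2\le d\le n$, Theorem~\ref{thmA} reduces $(n,d)$ to $(n,d+1)$, $(n-1,d-1)$ and $(n-1,d)$; here $(n,d+1)$ is supplied by the inner induction (or by \cite{Lu} when $d+1=n+1$), while $(n-1,d-1)$ and $(n-1,d)$ fall under the outer induction, since $1\le d-1$, $d\le n$, and both $d-1$ and $d$ are $\le(n-1)+1$. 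The base case $n=1$ is immediate, $d\in\{1,2\}$ being covered respectively by the hypothesis and by \cite{Lu}. This establishes Conjecture~\ref{conjA} for every $(n,d)$ with $1\le d\le n+1$.

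For the reduction from Conjecture~\ref{conjA} to Conjecture~\hyperref[broue]{1} I would unwind \eqref{eqminmax}. Over $\qlb$ the group algebras of the intermediate split Levi subgroups of type $A$ are semisimple, so the derived tensor products in \eqref{eqminmax} are exact and amount, level by level, to composing the decompositions furnished by Conjecture~\ref{conjA}: $\Rgc(\X(\mathbf{I},\mathbf{w}F),\mathcal{F}_\nu)$ becomes a direct sum of shifted, Tate--twisted copies of $V_\lambda$ indexed by the chains of $d$-hook additions leading from the $d$-core $\nu$ to the partitions $\lambda$ of $n+1$ having $\nu$ as $d$-core. It then remains to check that the total degree shift and the total Tate twist along such a chain depend only on the endpoint $\lambda$. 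This is exactly what Lemma~\ref{pervA} and the additivity of $C_d$ deliver: $\pi_d(X,x)$ is $2\dim\X_{m,d}$ minus the increment of $C_d(\deg\chi_\bullet)$ at an elementary $d$-induction, and $\gamma_d(X,x)$ is $\dim\X_{m,d}$ minus the increment of $(a_\bullet+A_\bullet)/d$; since $C_d$, the invariants $a_\bullet+A_\bullet$, and the dimensions of the varieties all add along the amalgamation, both quantities telescope. After Poincar\'e duality the resulting cohomological degree of $V_\lambda$ is $C_d(\deg\chi_\lambda)-C_d(\deg\chi_\nu)$ --- which is Craven's formula \eqref{cravenform} --- and the eigenvalue of a sufficiently divisible power of $F$ on $V_\lambda$ is $q^{m(a_\lambda+A_\lambda-a_\nu-A_\nu)/d}$. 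Path-independence of the shift gives the mutual disjointness (i) of Conjecture~\hyperref[broue]{1}, the degree identity gives (ii), and since in type $A$ every $d$-cuspidal pair is conjugate, through the $d$-core combinatorics (cf.\ \cite{DM3}), to the one built in Section~1.2, Conjecture~\hyperref[broue]{1} follows for all $d$.

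Granting Theorems~\ref{restrA} and \ref{thmA}, the two inductions are formal; the point requiring genuine care is the reduction of the third paragraph. One must verify that the derived tensor products in \eqref{eqminmax} really are computed by the naive composition of the Conjecture~\ref{conjA} decompositions --- so that the multiplicities of the $V_\lambda$, which count the chains of $d$-hook additions, are the ones predicted by the conjectural $d$-cyclotomic Hecke algebra structure on $\mathrm{End}_{\G^F}(\mathrm{H}^\bullet)$ --- and that the degree shifts and Tate twists telescope with no hidden correction term, which is precisely the combination of Lemma~\ref{pervA} with the additivity of $C_d$. With this in hand the corollary follows, and as a byproduct the cohomology of the parabolic Deligne--Lusztig variety attached to an arbitrary unipotent $\Phi_d$-block in type $A$ is determined by that of $\X(\boldsymbol \pi)$ alone.
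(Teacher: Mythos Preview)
Your approach is essentially the paper's: deduce Conjecture~\ref{conjA} for $d=1$ from the hypothesis, propagate via Theorem~\ref{thmA} and Lusztig's Coxeter computation to all $(n,d)$, then feed back into \eqref{eqminmax} to obtain Conjecture~\hyperref[broue]{1}. Your double induction in the second paragraph is correct and matches what the paper compresses into ``therefore for any variety $\X_{n,d}$ by~\ref{thmA}''; your third paragraph spells out what the paper abbreviates as ``We use \ref{eqminmax} to conclude''.

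The one place where the paper is more careful than your proposal is the step you call ``inverting \eqref{eqminmax} rank by rank is routine''. Conjecture~\hyperref[broue]{1} for $d=1$ gives you only disjointness and the degree in which each $\chi_\lambda$ may occur; it does not by itself tell you \emph{which} $\chi_\lambda$ occur in $\Hc^\bullet(\X_{n,1},\mathcal{F}_\mu)$, nor with what multiplicity. The paper fills this in explicitly: from the one-step decomposition $\Rgc(\X(\boldsymbol\pi))\simeq \Rgc(\widetilde\X(\mathbf{J}_1,\mathbf{v}_1F))\otimes_{\L_I}\Rgc(\X(\boldsymbol\pi_I))$ and the fact that every unipotent $\chi_\mu$ occurs in $\Hc^\bullet(\X(\boldsymbol\pi_I))$, each $\Hc^i(\X_{n,1},\mathcal{F}_\mu)$ embeds in some $\Hc^{i+j}(\X(\boldsymbol\pi))$, whence disjointness; then the alternating sum $\sum(-1)^i\Hc^i(\X_{n,1},\mathcal{F}_\mu)=\mathrm{R}_{L_I}^G(\chi_\mu)$, which for a $1$-split Levi is ordinary Harish-Chandra induction, pins down the constituents and their (unit) multiplicities; finally \eqref{eq2} and Lemma~\ref{pervA} match the degrees and Tate twists with those of Conjecture~\ref{conjA}. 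Once you make this step precise (the Deligne--Lusztig alternating sum is the missing ingredient in your sketch), your argument and the paper's coincide.
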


\begin{proof} Assume that Conjecture \hyperref[broue]{1}  holds for $d=1$, that is for the variety $\X(\boldsymbol \pi)$. Let $I = J_1 = \{s_2,\ldots,s_n\}$ and $\mathbf{b} = \mathbf{v}_1 = \mathbf{s}_1 \cdots \mathbf{s}_n \mathbf{s}_n \cdots \mathbf{s}_1$. By \cite[Proposition 8.26]{DM3} we have

\centers{$ \Rgc\big(\X(\boldsymbol \pi),\qlb\big) \simeq \Rgc \big(\widetilde \X(\mathbf{I},\mathbf{b}F), \qlb\big) \otimes_{\qlb \L_I^{t(\mathbf{b})F}} \Rgc\big(\X(\boldsymbol \pi_{I}),\qlb\big).$}

\noindent Since the cohomology of $\X(\boldsymbol \pi_I)$ contains all the unipotent characters of $\L_I^{t(\mathbf{b})F}$, we deduce  that for any partition $\mu$ of $n$, the  groups $\Hc^i(\X_{n,1},\mathcal{F}_\mu)$ are submodules of the cohomology groups of $\X(\boldsymbol \pi)$. Consequently, they are disjoint as soon as Conjecture \hyperref[broue]{1} holds for $\X(\boldsymbol \pi)$. Since we have assumed that it holds also for $\X(\boldsymbol \pi_I)$ we have actually 
\begin{equation}\label{eq2}\Hc^i\big(\X(\boldsymbol \pi),\qlb\big) \, \simeq \, \displaystyle \bigoplus_{\mu - n} \Hc^{i-4\nu_{\L_I} + 2 A_\mu}\big (\X_{n,1}, \mathcal{F}_\mu\big)(2\nu_{\L_I} - a_\mu - A_\mu)
\end{equation}
\noindent as a  $G\times \langle F \rangle$-module. Now, the alternating sum of the cohomology groups of $\X_{n,1}$ represents the Deligne-Lusztig induction from $\L_I^{t(\mathbf{b})F} \simeq L_I$ to $G$. Therefore a character $\chi_\lambda$ appear in $\Hc^\bullet(\X_{n,1},\mathcal{F}_\mu)$ if and only if $\mu$ is the restriction of $\lambda$, or equivalently, if $\lambda$ is obtained from $\mu$ by adding a $1$-hook. This, together with \ref{eq2} and Lemma \ref{pervA} proves that Conjecture \ref{conjA} holds for $\X_{n,1}$, and therefore for any variety $\X_{n,d}$ by \ref{thmA}. We use \ref{eqminmax} to conclude.
\end{proof}

\begin{proof}[Proof of the Theorem]   Let $X$ be a $\beta$-set associated the partition $\mu$ of $n-d+1$. We can always assume that it contains $\{0,1,\ldots,d-1\}$. The partitions $\mu^{(j)}$'s of $n-d$ which are obtained by restricting $\mu$ can be associated to the following $\beta$-set: 

\centers{$X^{(j)} = \{x_1^{(j)} < \cdots < x_s^{(j)}\} \quad \text{with} \quad x_i^{(j)} \, = \left\{ \hskip-1.3mm\begin{array}[c]{l} x_j-1 \ \ \text{if } i=j; \\[3pt] x_i \ \  \text{otherwise.} \end{array}\right. $}

Let $I = \{s_1, \ldots,s_{n-1}\}$. By Theorem \ref{restrA}, the Harish-Chandra restriction of the cohomology of $\X_{n,d}$ can be fitted into the following distinguished triangle:

\centers{$ \Rgc(\mathbb{G}_m \times \mathrm{X}_{n-1,d-1}, \qlb \otimes \mathcal{F}_\mu) \longrightarrow \Rgc( \X_{n,d},\mathcal{F}_\mu)^{U_I} \longrightarrow \Rgc\big(\mathrm{X}_{n-1,d},\displaystyle \bigoplus \mathcal{F}_{\mu^{(j)}}\big)[-2] (1) \rightsquigarrow$}

\noindent Now, if we assume that Conjecture \ref{conjA} holds for both $(n-1,d-1)$ and $(n-1,d)$, the  complexes on the left and  right-hand side are completely determined. Let us examine the different eigenvalues of $F$ that can appear:

\begin{itemize}

\item[(a)] on $\mathscr{C} = \Rgc(\mathbb{G}_m \times \mathrm{X}_{n-1,d-1}, \qlb \otimes \mathcal{F}_\mu)$, the eigenvalues of $F$ are $q^{n+x-s}$ and $q^{n+1+x-s}$ with $x \in X$ such that $x+d-1 \notin X$. The character of the corresponding eigenspace is $\chi_\lambda$ where $\lambda$ is the partition of $n$ obtained by adding to $\mu$ a $(d-1)$-hook represented by $x$; 

\item[(b)] on $\mathscr{D}^{(j)} = \Rgc(\mathrm{X}_{n-1,d}, \mathcal{F}_{\mu^{(j)}})[-2](1)$, the eigenvalues of $F$ are $q^{n+1+x-s}$ where $x \in X^{(j)}$ is such that $x+d \notin X^{(j)}$. The character of the corresponding eigen\-space is $\chi_\lambda$  where $\lambda$ is the partition of $n$ obtained by adding to $\mu^{(j)}$ a $d$-hook represented by $x$.

\end{itemize}

 We shall now determine $\Hc^\bullet(\X_{n,d},\mathcal{F}_\mu)^{U_I}$  by studying each eigenspace of $F$ separately. For $x$ a positive integer, we can separate the following cases:

\mk

\noindent \textbf{Case (1).} Assume first that $x \in X$ and $x+d \notin X$. Let $\lambda = \mu * x$ be the partition of $n+1$ obtained by adding to $\mu$ a $d$-hook from $x$. We want to prove that the $q^{n+1- s +x}$-eigenspace of $F$ on $\Hc^\bullet(\X_{n,d},\mathcal{F}_\mu)^{U_I}$ is non-zero in degree $\pi_d(X,x)$ only  and that its character is the Harish-Chandra restriction of $\chi_\lambda$.

\sk

By (a), the $q^{n+1+x-s}$-eigenspace of $F$ on $\mathscr{C}$ will produce non-zero representations in the following two cases:
\begin{itemize}

\item if $x+d-1 \notin X$, then one obtains a character associated to the $\beta$-set $(X \smallsetminus\{x\}) \cup \{x+d-1\}$ and it is concentrated in degree 

\centers{$\begin{array}{r@{\, \ = \, \ }l}2+\pi_{d-1}(X,x) & 2 + 2\big(n-1+x - \#\{y \in X \, | \, y < x\}\big) - \#\{y \in X \, | \, x < y< x+d-1\} \\[4pt] &
2 \big(n+x - \#\{y \in X \, | \, y < x\}\big) - \#\{y \in X \, | \, x < y < x+d-1\} \\[4pt]
 & \pi_d(X,x). \end{array}$}
 
\item if $x+1 \in X$, then the corresponding  $\beta$-set is $(X \smallsetminus\{x+1\}) \cup \{x+d\}$ and the associated character appears in degree $1 + \pi_{d-1}(X,x+1)$ only. But we have

\centers{$\begin{array}{r@{\, \ = \, \ }l} \pi_{d-1}(X,x+1) & 2\big(n+x - \#\{y \in X \, | \, y < x+1\}\big) - \#\{y \in X \, | \, x+1 < y < x+d\} \\[4pt] &
 2\big(n+x - 1- \#\{y \in X \, | \, y < x\}\big) - \big( \#\{y \in X \, | \, x < y < x+d\}-1\big) \\[4pt]
 & \pi_d(X,x) -1. \end{array}$}
 
\end{itemize} 

On the other hand, the $q^{n+1+x-s}$-eigenspace of $F$ on $\mathscr{D}^{(j)}$ is non-zero if and only if $x \in X^{(j)}$ and $x+d \notin X^{(j)}$. This happens if and only if $x$ and $x+d+1$ are different from $x_j$. In that case, the $\beta$-set corresponding to the character of the eigenspace will be $(X^{(j)} \smallsetminus \{x\}) \cup \{x+d\}$. Furthermore, the degree in which this character will appear is $ 2 + \pi_d(X^{(j)},x)$, which is clearly equal to $\pi_d(X,x)$ in that case.

\sk

Now, the $\beta$-set $Y = (X\smallsetminus \{x\})\cup \{x+d\}$ is associated to the partition $\lambda = \mu * x$.  As mentioned in the beginning of Section \ref{decA}, the restriction of $\lambda$ is obtained by restricting the hook (usually in two different ways) or by restricting $\mu$. In the framework of $\beta$-sets, it corresponds to decreasing specific elements of $Y$: 
\begin{itemize}

\item if $x + d-1 \notin X$, one can replace $x+d$ by $x+d-1$ in $Y$ and we obtain the $\beta$-set $(X\smallsetminus \{x\}) \cup \{x+d-1\}$;

\item if $x + 1 \in X$, one can replace $x+1$ by $x$ in $Y$ and we obtain the $\beta$-set $(X\smallsetminus \{x+1\}) \cup \{x+d\}$;

\item if $x_j \in X$ is different from $x$ or from $x+d+1$, and if $x_j -1 \notin X$, then one can replace $x_j$ by $x_j-1$ in $Y$ to obtain the $\beta$-set $(X^{(j)} \smallsetminus \{x\}) \cup \{x+d\}$.
\end{itemize} 

\noindent This shows that the character of the $q^{n+1+x-s}$-eigenspace of $F$ on $\Hc^\bullet(\X_{n,d},\mathcal{F}_\mu)^{U_I}$ is the Harish-Chandra restriction of $\chi_{\mu * x}$. 

\begin{lem} Let $\lambda$ be a partition of $n+1$, with $n \geq 3$ and let $\chi$ be a (non-necessarily irreducible) unipotent character of $G$. Then the Harish-Chandra restriction of $\chi$ and $\chi_\lambda$ are equal if and only if $\chi = \chi_\lambda$.
\end{lem}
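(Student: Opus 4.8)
The plan is to prove the statement by induction on $n$, using Harish-Chandra restriction to a maximal parabolic together with the standard fact that the unipotent Harish-Chandra restriction of $\chi_\lambda$ to a Levi of type $A_{n-1}$ is multiplicity-free, being $\sum_j \chi_{\lambda^{(j)}}$ where the $\lambda^{(j)}$ run over the partitions of $n$ obtained by removing one box from $\lambda$. Write $\chi = \sum_\tau m_\tau \chi_\tau$ with $m_\tau \in \mathbb{Z}_{\geq 0}$ (one direction is trivial, so assume the Harish-Chandra restrictions agree). The key combinatorial input is that the map sending a partition $\lambda$ of $n+1$ to the multiset $\{\lambda^{(j)}\}$ of its one-box-deletions is injective on partitions of $n+1$, once $n+1 \geq 2$; this is classical (one recovers $\lambda$ from its set of maximal subpartitions, e.g. via the first-column/first-row information), and it is exactly what makes the branching graph for $\mathfrak{S}_n$ a lattice at this level.

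First I would record that $^*\mathrm{R}_L^G$ is injective on the $\mathbb{Z}$-span of unipotent characters: since the decomposition matrix for Harish-Chandra restriction along the branching rule is ``upper-triangular'' with respect to dominance (or, more simply, since $^*\mathrm{R}(\chi_\tau) = \sum_j \chi_{\tau^{(j)}}$ and these systems of supports are distinct), a nonzero unipotent virtual character cannot have vanishing restriction. Concretely, pick $\tau$ maximal in dominance order among those with $m_\tau \neq 0$ in $\chi - m_\lambda^{-1}\chi$... — more cleanly: if $^*\mathrm{R}(\chi) = {}^*\mathrm{R}(\chi_\lambda)$ then $^*\mathrm{R}(\chi - \chi_\lambda) = 0$, and I claim any unipotent virtual character $\psi = \sum_\tau a_\tau \chi_\tau$ with $^*\mathrm{R}(\psi)=0$ is zero. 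Indeed $^*\mathrm{R}(\psi) = \sum_\sigma \big(\sum_{\tau : \sigma \subset \tau, |\tau/\sigma|=1} a_\tau\big)\chi_\sigma$, so for every partition $\sigma$ of $n$, $\sum_{\tau \supset \sigma} a_\tau = 0$. Taking $\sigma$ to be a partition of $n$ that is covered (in Young's lattice) by a unique partition $\tau_0$ of $n+1$ — e.g. $\sigma$ obtained from $\tau_0$ by removing its unique removable corner, for $\tau_0$ a rectangle, or more robustly using that every $\tau_0$ has at least one cover-predecessor $\sigma$ whose only cover-successor among partitions of $n+1$ is... — this needs the right lemma.

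The cleanest route, and the one I would actually carry out: order partitions of $n+1$ by dominance $\trianglelefteq$. For a partition $\sigma$ of $n$ let $\sigma^+$ denote the partition of $n+1$ obtained by adding a box in the first row; this is the dominance-largest partition covering $\sigma$. Then in $^*\mathrm{R}(\psi) = 0$, look at the coefficient of $\chi_\sigma$ where $\sigma = \tau^-$ is obtained from a given $\tau$ by removing a box from the last nonempty row: among the $\rho$ of $n+1$ with $\rho \supset \sigma$, the dominance-minimal one is $\tau$ itself when the removed box was the ``lowest'' one — so by downward induction on dominance order on $\tau$, starting from the minimum $(1^{n+1})$ (where $\sigma = (1^n)$ is covered only by $(2,1^{n-1})$ and $(1^{n+1})$, handle the two-element fibre directly) one solves for $a_\tau$ and gets $a_\tau = 0$ for all $\tau$. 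Since $\psi = \chi - \chi_\lambda$ this gives $\chi = \chi_\lambda$.

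The main obstacle is purely the combinatorial bookkeeping in the last paragraph: one must choose, for each $\tau$, a subpartition $\sigma = \sigma(\tau)$ of $n$ so that the fibre $\{\rho \text{ of } n+1 : \rho \supset \sigma\}$ decomposes compatibly with a fixed well-ordering of partitions of $n+1$, allowing the linear system $\{\sum_{\rho \supset \sigma} a_\rho = 0\}_\sigma$ to be solved triangularly. The hypothesis $n \geq 3$ is presumably exactly what is needed to avoid the small-rank coincidences (in $\mathfrak{S}_1, \mathfrak{S}_2$ the exceptional/degenerate unipotent characters and the tiny Young's lattice break the triangularity); I would check $n = 3$ by hand and then run the induction. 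An alternative, essentially equivalent, phrasing avoids the ordering entirely: the incidence matrix between partitions of $n+1$ and partitions of $n$ (box-removal) has full column rank for $n+1 \geq 2$ — this is a known fact about Young's lattice (the up operator $U$ satisfies $DU - UD = \mathrm{id}$, so $U$ is injective) — and that is precisely the statement that $^*\mathrm{R}$ restricted to unipotent characters is injective, which finishes the proof immediately.
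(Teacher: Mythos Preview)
Your argument has a fundamental flaw: you attempt to show that ${}^*\mathrm{R}$ is injective on the full $\mathbb{Z}$-span of unipotent characters, but this is false for dimension reasons alone. There are strictly more partitions of $n+1$ than of $n$ once $n \geq 1$, so the down operator $D$ in Young's lattice --- a linear map from a $p(n+1)$-dimensional space to a $p(n)$-dimensional space --- cannot possibly be injective. Concretely, for $n=3$ one has
\[
{}^*\mathrm{R}\big(\chi_{(4)} - \chi_{(3,1)} + \chi_{(2,2)}\big)
 \;=\; \chi_{(3)} - \big(\chi_{(3)} + \chi_{(2,1)}\big) + \chi_{(2,1)} \;=\; 0.
\]
Your ``triangular'' argument via dominance cannot be repaired for the same reason (the matrix is $p(n)\times p(n+1)$, not square), and your closing appeal to the Heisenberg relation $DU - UD = \mathrm{id}$ establishes injectivity of the \emph{up} operator $U$ (Harish-Chandra \emph{induction}), not of $D$; you have the two operators swapped in that last sentence.

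The hypothesis you never use --- and which is essential --- is that $\chi$ is a \emph{genuine} character, with non-negative multiplicities. The paper's proof exploits this positivity from the outset: if $\chi_\nu$ is any irreducible constituent of $\chi$ then ${}^*\mathrm{R}(\chi_\nu) \leq {}^*\mathrm{R}(\chi) = {}^*\mathrm{R}(\chi_\lambda)$ as characters, so every one-box removal of $\nu$ is also a one-box removal of $\lambda$. A short combinatorial argument on Young diagrams then shows that $\nu \neq \lambda$ forces $\nu$ to be a rectangle, and one finishes by checking that no non-trivial non-negative combination of rectangle characters can have the same restriction as $\chi_\lambda$ for $n \geq 3$. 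Your purely linear-algebraic route cannot see the positivity constraint and therefore cannot succeed.
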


\begin{proof}[Proof of the Lemma] Assume that there exists a partition $\nu = \{\nu_1 \leq \nu_2 \leq \cdots \leq \nu_r\}$ of $n+1$ with $\nu_1 \neq 0$ such that the difference between the Harish-Chandra restriction of $\chi_\lambda$ and $\chi_\nu$ is still a unipotent character. This means that in the Young diagram of $\nu$, any box that can be removed can be replaced to form the Young diagram of $\lambda$. If $\nu \neq \lambda$, this is possible only if $\nu_1 = \nu_2 = \cdots = \nu_r$.
\sk

Let $\chi$ be a unipotent character of $G$ which has the same Harish-Chandra restriction as $\chi_\lambda$. If $\chi \neq \chi_\lambda$, we deduce from the previous argument that all the irreducible constituents of $\chi$ are of the form $\chi_\nu$ with $\nu = (a,a, \ldots,a)$. This can happen if and only if $n=2$ and $\lambda = (1,2)$. \end{proof}

When $n \geq 3$, we deduce that the $q^{n+1+x-s}$-eigenspace of $F$ on $\Hc^\bullet(\X_{n,d},\mathcal{F}_\mu)$ is actually $\chi_{\mu * x}$. If $n=2$, then the only ambiguity concerns $\chi_{\mu * x}$ when $\mu *x = (1,2)$. In that case, the $q^{n+1+x-s}$-eigenspace can be either $\chi_{\mu * x}$ or $\mathrm{1}_G + \mathrm{St}_G$. But by \cite[Corollary 8.28]{DM3}, the trivial character and the Steinberg character cannot occur in the same cohomology group of $\X_{n,d}$ as soon as the dimension of this variety is non-zero.

\mk

\noindent \textbf{Case (2).} Assume now that $x \notin X$. The $q^{n+1+x-s}$-eigenspace of $F$ on $\mathscr{D}^{(j)}$ is non-zero if and only if $x\in X^{(j)}$ and $x + d \notin X^{(j)}$. Since $x\notin X$, this forces $x =x_j^{(j)}= x_j-1$ and $x_j + d-1 \notin X$. In that case, its character corresponds to  a partition with $\beta$-set $(X \smallsetminus \{x + 1\}) \cup \{x+d\}$ and it appears in degree $2+\pi_d(X^{(j)},x)$ only. On the other hand,  the $q^{n+1+x-s}$-eigenspace of $F$  on $\mathscr{C}$ is non-zero if and only if $x+1 \in X$ and $x+1 + d-1 = x+d \notin X$. By (a), the character of this eigenspace corresponds to a partition with $\beta$-set $(X \smallsetminus \{x + 1\}) \cup \{x+d\}$. Furthermore, it appears in degree  $1+\pi_{d-1}(X,x+1)$ only. Note that in that case we have

\centers{$\begin{array}{r@{\, \ = \, \ }l} \pi_{d-1}(X,x+1) & 2\big(n+x - \#\{y \in X \, | \, y < x+1\}\big) - \#\{y \in X \, | \, x+1 < y < x+d\} \\[4pt] &
 2\big(n+x - \#\{y \in X \, | \, y < x\}\big) - \big( \#\{y \in X \, | \, x < y < x+d\}-1\big) \\[4pt]
 & \pi_d(X,x) +1 \end{array}$}
 
\leftcenters{and}{$\begin{array}[t]{r@{\, \ = \, \ }l} \pi_{d}(X^{(j)},x) & 2\big(n-1+x - \#\{y \in X^{(j)} \, | \, y < x\}\big) - \#\{y \in X^{(j)} \, | \, x < y < x+d\} \\[4pt] &
 2\big(n-1+x - \#\{y \in X \, | \, y < x\}\big) - \big( \#\{y \in X \, | \, x < y < x+d\}-1\big) \\[4pt]
 & \pi_d(X,x) - 1. \end{array}$}

\sk

We deduce that the $q^{n+1+x-s}$-eigenspace of $F$ on $\Hc^\bullet(\X_{n,d},\mathcal{F}_\mu)^{U_I}$ is isotypic and concentrated in two consecutive degrees. However, there are only a few unipotent characters that can have an isotypic Harish-Chandra restriction: they correspond to partitions of the form $(a,a,\ldots,a)$. Among them we can find the Steinberg character $\mathrm{St}_G$ (with $a=1$) and the trivial character $\mathrm{1}_G$ (with $a=n+1$). But by \cite[Corollary 8.28]{DM3} they have respective eigenvalues $1$ and $q^{2n+1-d}$. Let us  write the $\beta$-set of $\mu$ as $X = \{0,1,\ldots,k-1,\mu_1+k,\mu_2+k+1,\ldots,\mu_r+s-1\}$ with $k\geq d$. Since $x \notin X$, one must have $k-1 < x  < \mu_r+s-1$ and hence
\begin{equation} \label{ineq1}d-1 \leq n+ k - s < n+1+ x-s < n+1+\mu_r-1 \leq 2n+1-d.
\end{equation}
\noindent We deduce that the  $q^{n+1+x-s}$-eigenspace of $F$ on $\Hc^\bullet(\X_{n,d},\mathcal{F}_\mu)$ is either zero, or consists of two copies of the character $\chi_\lambda$ in two consecutive degrees, where $\lambda = (a,a,\ldots,a)$ with $1 < a < n+1$. We shall actually prove that it is always zero, but before that we need to study the last case.

\begin{rmk}\label{rmkbetaset}The Harish-Chandra restriction of $\chi_\lambda$ corresponds to the partition $(a-1,a,\ldots,a)$. Therefore if the associated character appears in the cohomology of $\mathscr{C}$ then  the $\beta$-set $(X\smallsetminus\{x+1\}) \cup \{x+d\}$ must correspond to the partition $(a-1,a,\ldots,a)$. This gives a rather strong condition on $X$: we will have either

\centers{$ X \, = \, \{0,1,\ldots,k-1,x+1,b,b+2,b+3, \ldots , \widehat{x+d}, \ldots,b+r\} $}

\noindent with $b+2 \leq x + d \leq b+r$, or

\centers{$ X \, = \, \{0,1,\ldots,k-1,x+1, x+d+2,x+d+3, \ldots,x+d+r\}.$}

\end{rmk}

\noindent \textbf{Case (3).} Finally, assume  that $x \in X$ and $x +d \in X$. The $q^{n+1+x-s}$-eigenspace of $F$ on $\mathscr{D}^{(j)}$ is non-zero if and only if $x\in X^{(j)}$ and $x + d \notin X^{(j)}$. Since $x + d \in X$, this forces $x + d =x_j$ (and therefore $x_j-1 \notin X$). In that case, the character of the eigenspace corresponds to  a partition with $\beta$-set $(X^{(j)} \smallsetminus \{x\}) \cup \{x+d\} = (X \smallsetminus \{x\}) \cup \{x+d-1\}$. On $\mathscr{C}$, the Frobenius has a non-zero $q^{n+1+x-s}$-eigenspace if and only if $x + d -1 \notin X$ and its character is again associated to the $\beta$-set  $(X \smallsetminus \{x\}) \cup \{x+d-1\}$. This ensures that the $q^{n+1+x-s}$-eigenspace of $F$ on $\Hc^\bullet(\X_{n,d},\mathcal{F}_\mu)^{U_I}$ is isotypic. Using $x+d-1$ instead of $x$ in the inequalities \ref{ineq1} yields $0 < n+1+x-s < 2n+2-2d$ and therefore the previous argument applies.  We deduce that the  $q^{n+1+x-s}$-eigenspace of $F$ on $\Hc^\bullet(\X_{n,d},\mathcal{F}_\mu)$ is again either zero or consists of two copies of the character $\chi_\lambda$ in two consecutive degrees, namely $\pi_d(X,x)-1$ and $\pi_d(X,x)$, where $\lambda = (a,a,\ldots,a)$ and $1 < a < n+1$.

\mk

To conclude, we need to prove that the $q^{n+1+x-s}$-eigenspaces of $F$ are actually zero whenever $x \notin X$ or $x+d \in X$. Let us first summarize what we have proven so far:

\begin{itemize}

\item[(1)] if $x \in X$ and $x +d \notin X$ then the $q^{n+1+x-s}$-eigenspace of $F$ on $\Hc^\bullet(\X_{n,d},\mathcal{F}_\mu)$ is $\chi_{\mu * x}$ and it appears in degree $\pi_d(X,x)$ only; 

\item[(2)] if $x \notin X$, the $q^{n+1+x-s}$-eigenspace of $F$ is zero unless $x+1 \in X$ and $x+d \notin X$. In that case, it may consist of two copies of $\chi_\lambda$, one in degree $\pi_d(X,x)+1$ and one in degree $\pi_d(X,x)+2$, where $\lambda = (a,a,\ldots,a)$ with $1 < a <n+1$. 
Moreover, the $\beta$-set  $(X\smallsetminus \{x+1\}) \cup \{x+d\}$ must correspond to  the partition $(a-1,a,\ldots,a)$ (see Remark \ref{rmkbetaset});

\item[(3)]  if $x \in X$ and $x+d\in X$, the $q^{n+1+x-s}$-eigenspace of $F$ is zero unless $x+d-1 \notin X$. In that case, it can only be $\chi_\lambda$-isotypic with $\lambda = (a,a,\ldots,a)$ and $1<a<n+1$. Moreover, it is non-zero in degrees $\pi_d(X,x)-1$ and $\pi_d(X,x)$ only, and $(X\smallsetminus \{x\}) \cup \{x+d-1\}$ must be a $\beta$-set of the partition $(a-1,a,\ldots,a)$.

\end{itemize}

\noindent Now, if we assume that Conjecture \ref{conjA} holds for the variety $\X_{n,d+1}$, then we can use the distinguished triangle

\centers{$ \Rgc(\mathbb{G}_m \times \mathrm{X}_{n,d}, \qlb \otimes \mathcal{F}_\mu) \longrightarrow \Rgc( \X_{n+1,d+1},\mathcal{F}_\mu)^{U_I} \longrightarrow \Rgc\big(\mathrm{X}_{n,d+1},\displaystyle \bigoplus \mathcal{F}_{\mu^{(j)}}\big)[-2] (1) \rightsquigarrow$}

\noindent from Theorem \ref{restrA} to prove that the eigenspaces of $F$ on $\Hc^{\bullet}(\X_{n,d},\mathcal{F}_\mu)$ in cases (2) and (3) are indeed zero. 

\sk

Assume that $x\notin X$ and that there exists $1 < a < n+1$ such that the character $\chi_\lambda= \chi_{(a,a,\ldots,a)}$ appears twice in the $q^{n+1+x-s}$-eigenspace of $F$ on the cohomology of $\X_{n,d}$ $-$  that is in degrees $\pi_{d}(X,x)+1$ and $\pi_d(X,x)+2$.  Then, 

\begin{itemize}

\item if $x-1 \notin X$, the $q^{n+1+x-s}$-eigenspace of $F$ on $\Hc^{\bullet}(\mathbb{G}_m \times \mathrm{X}_{n,d}, \qlb \otimes \mathcal{F}_\mu)$ is $\chi_\lambda$-isotypic by (2) (we have $x-1 +1 \notin X$). Moreover, the $q^{n+1+x-s}$-eigenspace of $F$ on $\Hc^\bullet\big(\mathrm{X}_{n,d+1},\displaystyle \bigoplus \mathcal{F}_{\mu^{(j)}}\big)[-2] (1) $ is zero since Conjecture \ref{conjA} holds for the variety $\X_{n,d+1}$. We deduce that the eigenspace on $ \Hc^{\bullet}(\X_{n+1,d+1},\mathcal{F}_\mu)^{U_I}$ is $\chi_\lambda$-isotypic, which is impossible since no unipotent character can have $\chi_\lambda$ as a Harish-Chandra restriction when $1 < a < n+1$.

\item if $x-1 \in X$, then since $x-1 + d+1 \notin X$, the $q^{n+1+x-s}$-eigenspace of $F$ on $\Hc^{\bullet}(\mathbb{G}_m \times \mathrm{X}_{n,d}, \qlb \otimes \mathcal{F}_\mu)$ and $\Hc^\bullet\big(\mathrm{X}_{n,d+1},\displaystyle \bigoplus \mathcal{F}_{\mu^{(j)}}\big)[-2] (1) $ can be determined as in case (1).  It corresponds to the Harish-Chandra restriction of the partition $\mu *(x-1)$ obtained from $\mu$ by adding a $(d+1)$-hook from $x-1$. Furthermore, they will appear in degree $\pi_{d+1}(X,x-1)$ only, which is equal to

\centers{$\begin{array}{r@{\, \ = \, \ }l} \pi_{d+1}(X,x-1) & 2\big(n+x - \#\{y \in X \, | \, y < x-1\}\big) - \#\{y \in X \, | \, x-1 < y < x+d\} \\[4pt] &
 2\big(n+x +1- \#\{y \in X \, | \, y < x\}\big) - \#\{y \in X \, | \, x < y < x+d\} \\[4pt]
 & \pi_d(X,x) +2. \end{array}$}

\noindent To these characters we have to add the contribution of $\chi_\lambda$ and possibly of an other character $\chi_{\lambda'}$ corresponding to $\lambda = (a',a',\ldots,a')$ (when the case (3) applies to $x-1$).  Now, we claim that neither $\chi_\lambda$ nor $\chi_\lambda'$ can appear in the $q^{n+1+x-s}$-eigenspace of $F$ on $\Hc^\bullet\big(\mathrm{X}_{n,d+1},\displaystyle \bigoplus \mathcal{F}_{\mu^{(j)}}\big)[-2] (1) $. Indeed the assumptions on $x$  force $X$ (see Remark \ref{rmkbetaset}) to be either

\centers{$ X \, = \, \{0,1,\ldots,k-1,k+1,b,b+2,b+3, \ldots , \widehat{k+d}, \ldots,b+r\} $}

\noindent with $b+2 \leq k+d \leq b+r$, or

\centers{$ X \, = \, \{0,1,\ldots,k-1,k+1, k+d+2,k+d+3, \ldots,k+d+r\}.$}

\noindent Therefore a $\beta$-set corresponding to the partition $\mu * (x-1)$ of $n+2$ is either

\centers{$\{0,1,\ldots,k-2,k+1,b,b+2,b+3, \ldots,b+r\} $}

\noindent with $b+2 \leq k + d \leq b+r$, or

\centers{$ \{0,1,\ldots,k-2,k+1, k+d, k+d+2,k+d+3, \ldots,k+d+r\}.$}

\noindent We deduce that the restriction of $\mu * (x-1)$ will never produce $\lambda$ or $\lambda'$ unless $r=2$, $b = k+2$ and $d= 4$ in the first case, or $r=2$ and $d=2$ in the second case. In these very specific cases, we have either $X = \{0,\ldots,k-1,k+1,k+2\}$, which corresponds to the partition $\mu = (1,1)$ or $X = \{0,\ldots,k-1,k+1,k+4\}$, which corresponds to $\mu = (1,3)$. In this situation, we get $\lambda = (3,3)$ and $\mu * (x-1) = (2,2,3)$. But $(3,3)$ cannot be obtain by restricting $\mu * (x-1)$. This proves that the $q^{n+1+x-s}$-eigenspace of $F$ on $\Hc^{\pi_d(X,x)+2}(\mathrm{X}_{n+1,d+1},\mathcal{F}_\mu)^{U_I}$ is just $\chi_\lambda$ (plus possibly $\chi_{\lambda'}$), which is impossible by the properties of the Harish-Chandra restriction.
\end{itemize}

The same argument can be adapted to deal with the case (3), if we rather look at the $q^{n+2+x-s}$-eigenspace and  distinguish whether $x+1+d$ is an element of $X$ or not. The details are left to the reader.
\end{proof}

\begin{center} \subsection*{Acknowledgments} \end{center}

I would like to thank David Craven for introducing me to the results in \cite{Cra} and  for many stimulating discussions around the  formula he discovered. I would also like to thank the Hong Kong University of Science and Technology, and especially Xuhua He, for providing me excellent research environment during the early stage of this work.

\bk

\bibliographystyle{abbrv}
\bibliography{typeAchar0}

\end{document}